\newcommand{\iu}{\boldsymbol{\mathrm{i}}}
\newcommand{\ra}{\rightarrow}
\newcommand{\da}{\downarrow}
\newcommand{\prob}[1]{P\left(#1\right)}
\newcommand{\R}{\mathbb{R}}
\DeclareMathOperator*{\argmin}{arg\,min}
\newcommand{\norm}[1]{\| #1 \|_{\infty}}
\newcommand{\IP}{\mathcal{I}_{\text{pow}}}
\newcommand{\IPi}{\mathcal{I}_{\text{pow},i}}
\newcommand{\IC}{\mathcal{I}_{\text{cur}}}
\newcommand{\ICS}{\mathcal{I}_{\text{cur}^2}}
\newcommand{\IT}{\mathcal{I}_{\text{tmp},\tau}}
\newcommand{\WH}{K_i}
\newcommand{\pr}{^{\prime}}
\newcommand{\prpr}{^{\prime\prime}}
\newcommand{\eps}{\epsilon}
\newcommand{\pa}{\frac{\partial}{\partial\tau_{\ell}}}
\newcommand{\rk}{\text{rank }}
\newcommand{\diag}{\text{diag}}
\newcommand{\Rdet}{\mathcal{R}_{\text{det}}}
\newcommand{\Rcur}{\mathcal{\tilde{R}}^{(\text{cur})}_{\eps,p}}
\newcommand{\Rtemp}{\mathcal{\tilde{R}}^{(\text{tmp},\tau)}_{\eps,p}}
\newcommand{\Rlb}{\mathcal{\tilde{R}}^{(\text{tmp},\tau,LB)}_{\eps,p}}
\newcommand{\Rtaylor}{\mathcal{\tilde{R}}^{(\text{tmp},\tau,TL)}_{\eps,p}}
\newtheorem{lemma}{Lemma}[section]
\newtheorem{proposition}{Proposition}[section]
\newtheorem{Taylor approximation}{Taylor approximation}
\numberwithin{equation}{section}
\renewcommand*\env@matrix[1][*\c@MaxMatrixCols c]{%
  \hskip -\arraycolsep
  \let\@ifnextchar\new@ifnextchar
  \array{#1}}
\newcommand{\tom}[1]{  \ifthenelse{\boolean{showcomments}}
{ \textcolor{orange}{(TN says:  #1)}} {}  }
\newcommand{\jk}[1]{  \ifthenelse{\boolean{showcomments}}
{ \textcolor{green}{(JK says:  #1)}} {}  }
\newcommand{\bert}[1]{\ifthenelse{\boolean{showcomments}}
{ \textcolor{red}{(BZ says:  #1)}}{}}
\newcommand{\todo}[1]{\ifthenelse{\boolean{showcomments}}
{ \textcolor{green}{(To do:  #1)}}{}}
\newcommand{\ignore}[1]{}
\begin{document}
\title{Temperature Overloads in Power Grids Under Uncertainty: a Large Deviations Approach}

\author{Tommaso Nesti, Jayakrishnan Nair and Bert Zwart, \textit{Member, IEEE}

\thanks{
This research is supported by an NWO VICI grant.

T. Nesti is with Centrum
Wiskunde  Informatica (CWI), Science Park 123, Amsterdam, 1098 XG, Netherlands (email: nesti@cwi.nl).

Jayakrishnan Nair is with IIT Bombay, Powai, Mumbai, Maharashtra 400076, India (jayakrishnan.nair@ee.iitb.ac.in).

B. Zwart is with CWI, and also with Eindhoven University of Technology (email: bert.zwart@cwi.nl).

}}

\maketitle
\begin{abstract}
The  advent  of  renewable  energy  has  huge  implications  for the design and control of power grids.
 Due to increasing supply-side uncertainty, traditional reliability constraints such as strict bounds on current,
 voltage and temperature in a transmission line have to be replaced by computationally demanding chance constraints.
In this paper we use large deviations techniques to study the probability of current and temperature
overloads in power grids with stochastic power injections, and develop corresponding safe
capacity regions. In particular, we characterize the set of admissible power injections such that the probability of overloading of any line over a given time interval stays below a fixed target. We show how enforcing (stochastic) constraints on temperature, rather than on current, results in a less conservative approach and can thus lead to capacity gains.
\end{abstract}
\begin{IEEEkeywords}
Energy Systems, Network Analysis and Control, Uncertainty, Large Deviations Theory, Chance Constraints, Line Failures, Temperature Overload, Optimal Power Flow (OPF).
\end{IEEEkeywords}
\section{Introduction}

The electricity network is one of the backbones of modern society, and
is expected to function at all times. The advent of renewable energy
sources such as wind and solar generation have put this requirement under
pressure due to their considerable intermittency. Both the US and
Europe have set long-term goals on the usage of renewable energy, but
the effects of the integration of renewable sources into the power grid
are already felt today. For example, 80\% of the
bottlenecks in the European transmission grid are already caused by
renewables \cite{Yao2015}. Dealing with the uncertainty of renewable
generation effectively is therefore an essential requirement in the operation of
modern grids.

A well-controlled power grid matches supply and demand at all times,
ensuring that line constraints are not violated. The system operator
achieves this by making periodic control actions (typically every 5-15
minutes) that adapt the operating point of the grid in response to
changing conditions \cite{Ela2011}. A key assumption driving grid
operation today is that the grid remains roughly static between
control instants. In other words, it is assumed that the operating
point does not change much between control instants. Thus, the
operator simply ensures that line constraints are satisfied at each
control instant. This assumption is of course reasonable when there is
little short term uncertainty in demand and supply.

However, with increasing penetration of renewable sources, supply-side
uncertainty is bound to grow dramatically going forward. Renewable
energy sources, like wind and solar, can exhibit considerable
variability in power generation in the short term
\cite{Mills2010,Hodge2012}. Consequently, in the near future, system operators
will no longer be able to assume that the grid is static between
control instants, and will have to set the operating point taking into
account its variability in the short-term. This entails setting the
operating point of the grid with stochastic guarantees on constraint
satisfaction \cite{Fu2001,Wadman2012}. In other words, the operating
point must be set such that line constraint violation is a
sufficiently rare event until the next control instant. Moreover,
schemes like Optimal Power Flow (OPF) need to be
adapted in such a way that uncertainty is taken into account, and
outages stay rare events.

In an optimization framework, this leads to chance constraints which
are hard to evaluate analytically. The analysis of such constraints,
such as the probability of overheating or a blackout, is often done
using rare event simulation techniques
\cite{Wadman2012,Shortle2013}. Although detailed
simulations can be more accurate, short-term planning requires tools
that enable the grid operator to handle the stochastic constraints
much faster.

The main contribution of this paper is the development of tractable
capacity regions for a power grid with variable sources. Specifically,
we characterize the set of admissible power injections such that the
overheating of any transmission line over a given interval is a rare
event.  For the simplest network with two nodes and one line, our
results have been published without proof in the extended abstract
\cite{Bosman2014}.

Our main technique to achieve this is the theory of large
deviations. Specifically, we model the random power input sources as
small-noise stochastic differential equations (SDE), for
which a comprehensive and sufficiently explicit theory of large
deviations is available.  SDEs are a flexible modeling tool
  for continuously varying processes, and their use for wind speed
  modeling has been adopted recently by several
  authors~\cite{Wadman2016,Iversen2016,Moller2016,Iversen2017}. We model power flows on
the network using the DC approximation, which is standard in the
literature of high-voltage transmission system analysis
\cite{Purchala2005,Powell2004,Stott2009}.
(More realistic models based on AC power flow are often analytically
intractable, and may not even be well-posed
\cite{Iwamoto1981,Baillieul1982,Molzahn2013}.)
This allows us to apply Freidlin-Wentzell theory~\cite{Dembo1998} to
approximate the probability of an overload event, which in turn leads
to our capacity region characterization.

Avoiding transmission line overheating is a key reliability
constraint in order to avoid sag and loss of tensile strength
\cite{Wan1999}, one of the key causes of the Northeast blackout in $2003$  \cite{Can2004} and the San Diego blackout
in $2011$ \cite{Cal2012}. The classical approach for enforcing this constraint is to impose a certain upper bound on each line current. In Section~\ref{sc: current}, we follow this approach and develop capacity regions based on bounding the probability that any line current exceeds its limit
over a given interval. We prove an important convexity property of
this capacity region, which enables its application in optimization
formulations such as OPF. When the random power
injections are modeled by an Ornstein-Uhlenbeck (OU) process, we
express this capacity region in closed form.

Since line temperature responds gradually to current, a transient current overload does not
necessarily imply a temperature overload. Thus, imposing a
constraint on the probability of current overload results in a smaller
capacity region compared to the same constraint on the probability of
temperature overload. This observation was noted via simulations in
\cite{Wadman2012}. We show that it
is possible to develop large deviations estimates for temperature
constraint violations that lead to larger capacity regions, than the
ones obtained when only considering currents. To the best of our
knowledge, this paper provides the first analytical treatment of this
phenomenon.

 As it turns out, it is hard to compute such regions. We overcome
this issue by developing two tractable approximations: the first is an inner bound, and the second is based on a
Taylor series expansion of the decay rate of the temperature overload
probability. Both of the two regions coming out of these approximations capture the benefits of incorporating the
transient relationship between temperature and current. Moreover, they both
have the same computational complexity as the current-based
capacity region. For the case of OU power injections, we
express these capacity regions in closed form.

There are several related strands of literature, apart from the papers
dealing with rare event simulation that have been mentioned above.
Much of the literature on power flow in electricity grids considers
deterministic settings, focusing on computational and/or optimization
issues. Power flow papers that analyze stochastic models include
\cite{Wan1999,Fu2001,Phan2014,Summers2014}.

One remark about these papers is that they model stochastic behavior at particular snapshots
of time, as opposed to the process-level model in this
paper. Process-level models have been considered in simulation studies
\cite{Wadman2012} and in recent works on
chance-constrained versions of OPF
\cite{Bienstock2013,Bienstock2014}.
Other papers on chance-constrained OPFs include \cite{Vrakopoulou2013_2}, where the authors integrate probabilistic guarantees in a DC OPF via a scenario approach, and recent works on chance-constrained AC OPF, which tackle the nonlinearities by means of convex approximations~\cite{Bolognani2016}, relaxations~\cite{Vrakopoulou2013} and local linearization around a forecasted operating point~\cite{Roald2017}.

In this paper, we propose to approximate chance-constraints by using large-deviations (LD) theory. This is markedly different from the approach
of using Monte-Carlo methods~\cite{Calafiore2005}. Such methods can be effective but can require a large number of samples. Specifically, they are difficult to implement in our dynamic continuous-time setting. 
Another approach is to develop analytic convex approximations along the lines suggested in~\cite{Nemirovski2006}. Unfortunately, this method also seems mainly designed for static problems, and it appears hard to implement it in our dynamic setting.

In this work, we aim to develop analytic
tools which are explicit enough so as to be useful for planning and control of power grids in the short-term. Our work is complementary to recent efforts on
managing supply-side uncertainty via demand response
\cite{li11,Madaeni12}, energy storage \cite{Barton04,kim2009optimal},
and market (re)design \cite{Weber2010,nair2014}. A recent work on the analysis of temperature constraints in a discrete-time setting is~\cite{Kersulis2015}.

The paper is organized as follows. In Section~\ref{sc: model}, we
describe our model for power injections, line currents and line
temperatures. Sections~\ref{sc: current} and~\ref{sc: temperature}
constitute the core of the paper: we develop and characterize large
deviations-based capacity regions for line currents and line
temperatures, respectively, and provide explicit expressions in the
particular case that the power injections follow a multivariate
OU process. In Section~\ref{sc: numerics}, numerics
for the OU case are presented. We summarize and discuss future
directions in Section~\ref{sc: conclusions}.  Proofs are reported in
the Appendix.

\section{System Model} \label{sc: model}

\subsection{Model for the power grid and DC approximation}

The network is specified by a connected graph $\mathcal{G}=
(\mathcal{N} ,\mathcal{L} ),$ where $\mathcal{N} = \{0,1,2,\cdots,N\}$
is the set of nodes, modelling buses, and $\mathcal{L}$ is the set of
edges, representing the transmission lines. We have
$|\mathcal{N}|=N+1$ and $|\mathcal{L}|=L$.  After choosing an
arbitrary but fixed orientation of the transmission lines, we denote
by $\ell=(i,j)\in \mathcal{L} $ the transmission line between buses
$i$ and $j$, and by $w_{\ell}=w_{i,j}=w_{j,i}>0$ the weight of edge
$\ell=(i,j)$, corresponding to the \textit{susceptance} of that
transmission line. By convention, if there is no line between $i$ and
$j$ we set $w_{i,j}=w_{j,i}=0$.  The network structure is
described by the \textit{edge-vertex incidence matrix}
$A\in\R^{L\times (N+1)}$ defined as
  \begin{equation*}
 	A_{\ell, i}=\begin{cases}
    	1		&\text{if } \ell=(i,j),\\
    	-1		&\text{if } \ell=(j,i),\\
    	0    	&\text{otherwise}.
    \end{cases}
\end{equation*}
Denote by $W$ the $L \times L$ diagonal matrix defined as $W:=\mathrm{diag}(w_1,\dots, w_L)$.  The network topology and
weights are encoded in the \textit{weighted Laplacian matrix} of the
graph $G$, defined as $B := A^{\top} W A$ or entry-wise as
\begin{equation*}
	B_{i,j} =\begin{cases}
		-w_{i,j}	& \text{if } i \neq j,\\
		\sum_{k\neq i} w_{i,k} 	& \text{if } i=j.
	\end{cases}
\end{equation*}


%
%

Let $S(t)=(S_i(t))_{i \in \mathcal{N}}$ denote the vector of active
net power injections at time $t$, with the convention that $S_i(t) \ge
0$ ($S_i(t) \le 0$) means that power is generated (consumed,
respectively) at bus $i$. Node $0$ models the slack bus, which ensures
that there are no active power imbalances in the network.

Let $I(t)=(I_{\ell}(t))_{\ell \in \mathcal{L}}$ be the vector of line
currents, and $K(t)=(K_{\ell}(t))_{\ell \in \mathcal{L}}$ be the
vector of line temperatures. Each transmission line $\ell$ is
associated with a thermal limit $K_{\ell,\max},$ which is the maximum
permissible temperature of the line \cite{Wan1999}. We define
$I_{\ell,\max}> 0$ such that if $|I_{\ell}(t)|=I_{\ell,\max}$ at all
times, then $\lim_{t\to\infty} K_{\ell}(t)=K_{\ell,\max}.$ Throughout
this paper, we work with \textit{normalized} currents
$Y(t)=(Y_{\ell}(t))_{\ell \in \mathcal{L}}$, defined as
$Y_\ell(t)=I_{\ell}(t)/I_{\ell,\max}$.

In order to model the relation between power injections and line
currents, we make use of the \emph{DC approximation} (a.k.a. \emph{DC
  power flow},~\cite{Purchala2005}),
which leads to a linear relationship between power injections and
normalized line currents of the form $Y(t) = \overline{C} S(t)$, where
$\overline{C} \in \R^{L \times (N+1)}$ is a matrix encoding the
network topology and weights (Eq.~\eqref{eq:C blocks}).


In the remainder of this section, we briefly recall the DC
approximation. For notational simplicity, we suppress the dependence
of power, voltage and current on time when not essential. Let
$V_j=|V_j|e^{\iu\theta_j}$ denote the voltage at node $j$, with $V_j$
the voltage magnitude and $\theta_j$ the voltage phase.
The DC approximation consists of three assumptions: (i) voltage
magnitudes $|V_j|$ are all equal to $1$ (in the \textit{per-unit}
system), (ii) the phase differences between neighboring nodes are
small: $\forall\,(i, j)\in\mathcal{L}$, $|\theta_i - \theta_j|\ll 1$,
and (iii) the resistances of transmission lines are negligible with
respect to the reactances.
Under these assumptions, the reactive power flows are negligible
compared to the active power power flows. Moreover, the active power
flow $F_{i,j}$ on line $(i,j)\in\mathcal{L}$ can approximated
as $F_{i,j}\approx w_{i,j} (\theta_i - \theta_j).$ This allows
us to express the vector of (active) node power injections
$S\in\R^{N+1}$ in matrix form as
\begin{equation}\label{eq: DC power flow}
  S=B\theta,
\end{equation}
where $\theta\in\R^{N+1}$ is the vector of phase angles. Note that
$\sum_{i \in \mathcal{N}} S_i = 0,$ which is consistent with
Assumption~(iii) above ignoring line losses. Under the DC approximation, one can also approximate the line currents with the (active) power flow on the line $I_{i,j} \approx w_{i,j}(\theta_i - \theta_j)$.
%
The above
equivalence between the currents and power flows under this
approximation has been noted before (see, for
example,~\cite{Coffrin2012}).
Expressing the vector of line currents in matrix form, we obtain
\begin{equation}\label{eq: DC current}
 I=WA \theta.
\end{equation}

Following the derivation in \cite{Bienstock2014}, without loss of
generality we can set the phase angle at the slack bus equal to zero,
i.e. $\theta_0=0$, and rewrite the the system $S=B\theta$ as
\begin{equation}\label{eq: inverse}
\theta=\breve{B} S,
\end{equation}
where $\breve{B}=\begin{pmatrix}
            0 & 0\\
             0           & \hat{B}^{-1}
           \end{pmatrix}$ and
$\hat{B}$ is the $N$ by $N$ sub-matrix obtained
from $B$ by deleting the first row and first column.

Recall that the normalized currents are defined as
$Y_\ell(t)=I_{\ell}(t)/I_{\ell,\max}$, and let
$\Lambda=\mathrm{diag}(1/I_{1,\max},\dots, 1/I_{L,\max})$ .  In view of
Eq.~\eqref{eq: DC current} and \eqref{eq: inverse}, the active
normalized line currents $Y$ can be written as a linear
transformation of the power injections $S$,
i.e. $Y(t)=\overline{C} S(t)$, where $ \overline{C}:=\Lambda WA\breve{B}$.
\subsection{Stochastic and deterministic power injections}
We assume that power injections at nodes $1,\ldots,m\le N$ are
stochastic, modelling buses housing intermittent renewable power
generation.  On the other hand, power injections at nodes
$m+1,\ldots,N$ are assumed to be deterministic and constant, modeling
conventional loads/generators.

We will be interested in capturing the
probability of current/temperature overload over a finite horizon
$[0,T]$, which corresponds to the interval between periodic control
actions by the grid operator.
Thus, the buses in $\{m+1,\ldots,N\}$ are those that
may be assumed to have a steady power injection over this time
scale, denoted by $\mu_D$. Note that the power injection at the slack node $0$ is also
stochastic, since $S_0(t)=-\sum_{i=1}^n S_i(t)$.
 The power injection vector is of the form $S(t)
 =(S_0(t),X(t),\mu_D),$ where $X(t) \in \R^{m}$ is the vector of
 stochastic injections, and $\mu_D = (\mu_{D,i})_{i =
   m+1}^{N}\in\mathbb{R}^{N-m}$. We denote the initial condition for
 the stochastic power injections by $\mu := X(0)$, and let $\bar{\mu}
 := (\mu,\mu_D)$.

 In order to make the dependency of the normalized current on
 stochastic and deterministic power injections more explicit, we note
 that we can write
\begin{equation}\label{eq:C blocks}
Y(t)=\overline{C}S(t)=
 \begin{bmatrix}[c|c|c] & &     \\
       \mathbf{0} & \,\,C\,\, & \,\,C_D\,\,\\
       & &  \end{bmatrix}\begin{bmatrix}S_0(t)\\X(t)\\\mu_D\end{bmatrix},
\end{equation}
where $\mathbf{0}=[0,\ldots,0]^{\top}\in\mathbb{R}^L$, and $C\in \R^{L\times m},C_D\in \R^{L\times (n-m)}$ are the submatrices of $\overline{C}$ corresponding to stochastic and deterministic injections, respectively. More compactly,
\begin{equation}
  \label{eq: DC norm current exp}
  Y(t)=CX(t)+y,
\end{equation}
where $y:=C_D\mu_D$. We will refer to Eq.~\eqref{eq: DC norm
  current exp} as the DC power flow equations. The
following lemma shows that matrix $C$ has rank $m$, i.e. the number of stochastic power injections.
\begin{lemma}\label{lm: rank C}
  If the network graph is connected, $\rk(\overline{C})=N$ and
  $\rk(C)=m$. In particular, the matrix $C$ has linearly independent
  columns.
\end{lemma}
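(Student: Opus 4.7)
The plan is to prove each claim by tracking ranks through the factorization $\widetilde{C}=DA\breve{B}$, keeping in mind that $\overline{C}$ is obtained from $\widetilde{C}$ by rescaling rows by the positive constants $1/I_{\max,\ell}$ and therefore has the same rank.

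First I would show that $\rk(\overline{C})=N$. Since $D$ is a nonsingular diagonal matrix (all $\beta_{ij}$ corresponding to edges in $\mathcal{L}$ are nonzero), $\rk(DA\breve{B})=\rk(A\breve{B})$. The range of $\breve{B}$ is easy to compute from its block form: because $\hat{B}^{-1}$ is nonsingular, $\mathrm{range}(\breve{B})=\{v\in\mathbb{R}^{N+1}:v_0=0\}$, which is $N$-dimensional. Hence $\rk(A\breve{B})=\dim\bigl(A\cdot \mathrm{range}(\breve{B})\bigr)$, and it suffices to argue that $A$ is injective on this subspace. For a connected graph, the incidence relation $Av=0$ forces $v_i=v_j$ along every edge, so $\ker(A)=\mathrm{span}(\mathbf{1})$. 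Since $\mathrm{span}(\mathbf{1})\cap\{v:v_0=0\}=\{0\}$, the restriction of $A$ to $\mathrm{range}(\breve{B})$ has trivial kernel, which yields $\rk(A\breve{B})=N$ and hence $\rk(\overline{C})=N$.

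For the second claim, I would exploit the fact that the first column of $\overline{C}$ vanishes, as derived in the construction of the DC flow map. Writing $\overline{C}=[\mathbf{0}\,|\,C\,|\,C_D]$, the column space of $\overline{C}$ is spanned by the $N$ columns of $[C\,|\,C_D]$. Since the total rank is $N$, these $N$ columns are necessarily linearly independent. In particular, any subset of them is linearly independent, so the $m$ columns of $C$ form a linearly independent set, giving $\rk(C)=m$.

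The one step that requires some care is the injectivity argument for $A$ restricted to $\mathrm{range}(\breve{B})$; the rest is essentially a bookkeeping exercise with the block structure of $\breve{B}$ and standard facts about incidence matrices of connected graphs (Lemma~\ref{lm: laplacian}). Once injectivity is in hand, both rank equalities follow by dimension counting, and no further computation with the entries of $\hat{B}^{-1}$ is needed.
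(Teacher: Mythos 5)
Your proof is correct and follows essentially the same route as the paper's: both use the factorization $\widetilde{C}=DA\breve{B}$, the fact that $\ker(A)=\mathrm{span}(\mathbf{1})$ for a connected graph, and the observation that vectors in the range of $\breve{B}$ have vanishing first component, differing only in that you phrase the key step as injectivity of $A$ on $\mathrm{range}(\breve{B})$ while the paper computes $\ker(A\breve{B})$ and applies rank--nullity. The deduction of $\rk(C)=m$ from the zero first column of $\overline{C}$ is likewise identical to the paper's.
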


We may interpret $\bar{\mu}=(\mu,\mu_D)$ as the
vector of power injections set by the grid operator at time $0$ (for
example, $\bar{\mu}$ could be the result of an OPF planning).  Recall
that the initial condition for the normalized currents is
$Y(0) = \nu,$ where $\nu:= C\mu + y$. We are interested in scenarios
where power grids operate safely, by assuming that the nominal power
injections $\bar{\mu}$ are such that the corresponding expected line
currents at time $t=0$ do not exceed the critical level,
i.e. $\|\nu\|_{\infty}=\max_{\ell=1,\ldots,L}|\nu_{\ell}|\le 1$
(possibly, several $|\nu_{\ell}|$ could be close or equal to their threshold,
modeling a high-stress situation.
Subsequently, some of the power injections fluctuate randomly because of the variability of the
renewable generators. Our focus is to characterize the set of power injection vectors $\bar{\mu}$ such that the probability of current/temperature
overload over a finite horizon $[0,T]$ is below a prescribed target
$p.$
\subsection{Mapping between line current and line temperature}
In this section, we describe how line temperature depends on line
current. Recall that $K_{\ell}(t)$ denotes the temperature of line
$\ell.$ We work with \emph{normalized} line temperatures, defined as
follows. Let $K_{\text{env},\ell}$ be the ambient temperature
around line $\ell.$ We define the normalized line temperatures $\Theta(t)
= (\Theta_{\ell}(t))_{\ell \in \mathcal{L}}$ as
$\Theta_{\ell}(t)=\frac{K_{\ell}(t)-K_{\text{env},\ell}}{K_{\max,\ell}-K_{\text{env},\ell}}.$
Note that the reliability constraint on line temperatures reads
$\norm{\Theta_{\ell}} \le 1,$
where $\norm{f} := \max_{t \in [0,T]} \norm{f(t)}$ for a continuous function $f: [0,T] \ra \R^L$.

In this spirit, in
Section~\ref{sc: temperature} we characterize the capacity region of the power grid
based on bounding the temperature overload probability. In other
words, we describe the set of initial power injection vectors
$\bar{\mu}$ such that $\mathbb{P}(\norm{\Theta} \ge 1) \leq p,$ where $p$ is
a prescribed reliability target.

The transient relationship between the normalized temperature
$\Theta_{\ell}$ and the normalized current is given by the ordinary
differential equation~\cite{Pender1949}
\begin{equation}\label{eq: ode}
  \tau_{\ell} \frac{d\Theta_{\ell}}{dt}+\Theta_{\ell}=(Y_{\ell})^2,
\end{equation}
where $\tau_{\ell}>0$ denotes the thermal constant of the
transmission line $l$. Thus, we have
\begin{equation}\label{eq: xi}
  \Theta_{\ell}(t)=\Theta_{\ell}(0) e^{-t/\tau_{\ell}}+\frac{1}{\tau_{\ell}}\int_{0}^te^{-(t-s)/\tau_{\ell}}(Y_{\ell}(s))^2ds.
\end{equation}
Note that the instantaneous line temperature depends on the history of
the line current process, with an exponentially decaying weight on
past values. The parameter $\tau_{\ell}$ determines the dependence of
the instantaneous temperature on past values of current. If $\tau_{\ell}$ is small, the dependence on past current values becomes
weaker, i.e., the line temperature responds more quickly to changes in
current. In the limit as $\tau_{\ell} \da 0,$the response is instantaneous, i.e.,
$\Theta_{\ell}(t)=(Y_{\ell}(t))^2.$

For simplicity, we assume the initial condition
$\Theta_{\ell}(0)=(Y_{\ell}(0))^2=\nu_{\ell}^2\quad\forall \ell \in
\mathcal{L}$ for line temperatures. Note that $\nu_{\ell}^2$ is the
steady state temperature corresponding to a constant line current
$\nu_{\ell}.$\footnote{This assumption, which ignores the
    history of the temperature process prior to time $\approx 0$, is a
    natural engineering assumption if the actual line temperatures at
    time $\approx0$ are unavailable. If such measurements are
    available, it is possible to incorporate these into our capacity
    region based on temperature overload
    (Section~\ref{sc:temperature}) as well as its inner bound
    (Section~\ref{sc:temp_inner_bound}), although the analysis gets
    more complicated (see \cite[Section $4.3$]{Wadman2016}).} With
the above initial condition, let us denote the mapping from the
current process $Y$ to the temperature process $\Theta$ as $\Theta =
\xi_{\tau}(Y),$ where we emphasize the dependence on the thermal time
constants $\tau = (\tau_l)_{l \in \mathcal{L}}.$
\subsection{Stochastic model for power injections}
We now describe our stochastic model for the power injections $X(t)$.
Recall that in order to characterize the capacity region of the power
grid, we have to estimate the following overload
probabilities: $$\mathbb{P}(\norm{Y} \ge 1), \quad
\mathbb{P}(\norm{\Theta} \ge 1).$$ We use the theory of large
deviations to estimate these probabilities.
  Formally, we model the vector of random power injections
  $X^{\epsilon}(t)=(X_1^{\epsilon}(t),\ldots,X_m^{\epsilon}(t))$ as
  the strong solution of the $m$-dimensional stochastic differential
  equation (SDE)
  \begin{equation}
    \label{eq: SDE}
    dX^\epsilon(t)=b(X^\epsilon(t))dt+\sqrt{\epsilon}L(X^\epsilon(t))dW(t),
  \end{equation}
  where $X^{\epsilon}(0)=\mu,$ $b(x)=(b_1(x_1),\ldots,b_m(x_m))$,
  $L(x)=\mbox{diag}(\{l_i(x_i)\}_{i=1,\ldots,m})$ and
  $W(t)=(W_i(t))_{i=1,\ldots m}$.
  The function $b$ is referred to as
  the \emph{drift function}, and captures the evolution of the process
  in the absence of noise. The noise in the evolution of the process
  is introduced by the second term in \eqref{eq: SDE}: $W_i(t)$ is a
  standard Brownian motion in $\mathbb{R}$. This noise is modulated in
 a state-dependent fashion by the \emph{diffusion function} $L$, and the scaling parameter $\epsilon > 0$ captures the amount of randomness
  in the power injections. As $\epsilon \ra 0,$ the magnitude of the
  noise injected into the evolution of the process
  $X^{\epsilon}(t)$ diminishes, making large deviations from the
  ``noise-less" behavior exponentially (in $1/\epsilon$) unlikely.
  
   It is in this regime that LD theory gives us tractable
   approximations of the probabilities of the rare events
   corresponding to current and temperature overloads.  In
   practice,~$\epsilon$ can be chosen so that the variance of the
   process $X^{\epsilon}(t)$ matches the estimation error for
   renewable production over a specific time unit (Section~\ref{ss:
     118}).  We make the following regularity assumptions:$\forall
   i=1,\ldots, m,$ $b_i:\mathbb{R}\to\mathbb{R}$ is Lipschitz
   continuous and differentiable with $b_i(\mu_i)=0$;
   $l_i:\mathbb{R}\to(0,\infty)$ is Lipschitz continuous, bounded and differentiable.  

The $\epsilon-$scaled current process $Y^{\epsilon}(t) =
 (Y^{\epsilon})_{\ell \in \mathcal{L}}$ is defined as per the DC power
 flow equations: $Y^{\epsilon}(t) = C X^{\epsilon}(t) + y.$ Similarly,
 the $\epsilon-$scaled temperature process  $\Theta_{\tau}^{\epsilon}(t) =
 (\Theta_{\tau}^{\epsilon})_{\ell \in \mathcal{L}}$, with thermal constant $\tau$, is defined as
 $\Theta_{\tau}^{\eps} = \xi_{\tau}(Y^{\eps}).$ In the following sections, we
 apply the theory of large deviations to estimate the
 probabilities $\mathbb{P}(\norm{Y^{\eps}} \ge 1), \quad
\mathbb{P}(\norm{\Theta^{\eps}} \ge 1),$ in the limit as $\eps \da 0.$

\section{Capacity regions characterization based on current
  overload}\label{sc: current}
The traditional approach for ensuring line reliability is to impose
the condition
$\norm{Y(t)}:=\max_{\ell\in\mathcal{L}}|Y_{\ell}(t)| \le 1$ at
all times. In this spirit, in this section we characterize the
capacity region of the power grid obtained by bounding the probability
of current overload over $[0,T]$ by a prescribed target $q$
\[\mathbb{P}(\|Y\|_{\infty}\ge 1)\le q.\]
Our focus is to characterize the space of initial power
injections that can be `set' at time $0$, such that the probability
that the inherent variability in the stochastic sources leads to a
current overload before the next control instant is
small.\footnote{Given the equivalence between line currents and power
  flows under the DC approximation, the results in this section can
  also be interpreted in terms of the probability of exceeding line
  power flow limits.}

The above approach is in line with the conventional technique of
enforcing the thermal limits of transmission lines by capping the
peak current on each line. In Section~\ref{sc:temperature} a more refined approach, taking into account the transient relationship between line current and
line temperature, is presented.

In the following, we first provide a large deviations principle for the current overflow
event~$\{\|Y^{\epsilon}\|\ge1\}$ in the limit as $\epsilon \da 0.$
Next, we use this characterization to define the current-overload
based capacity region, and prove a convexity
result which facilitates its application as
a constraint in OPF. We then provide two lemmas that are useful for
computing the capacity region in practice and we give a
closed-form characterization of the capacity region when the stochastic
injections follow an Ornstein-Uhlenbeck process.
\subsection{Large deviations results}
The theory of large deviations (LD) is concerned with calculating the exponential decay of rare events probabilities, by means of the so-called \textit{rate functions}.
The main idea behind the theory is to provide a rigorous mathematical foundation to the approximation
\[
\mathbb{P}_{\epsilon}(E) = \int_{x\in E} f_{\epsilon} (x) dx \approx \max_{x\in E} f_{\epsilon} (x)
\]
in the regime where $\epsilon$ is small.
It turns out that the density $f_{\epsilon} (x)$ (where $x$ can be a function, or \textit{path}, on a interval $[0,T]$) can be approximated further, as is often done in the asymptotic analysis of integrals. More precisely, a family of probability measures $\mathbb{P}_{\epsilon}$ on a polish space $\mathcal{X}$ is said to satisfy a \textit{large deviation principle}~\cite{Dembo1998} with \textit{rate function} $I$ if, for all Borel measurable set $E\subset {\mathcal {X}}$,
\begin{align}\label{eq: LDP-general}
-\inf _{{x\in E^{\circ }}}I(x)&\leq \liminf _{\epsilon\to 0}\epsilon\log {\big (}{\mathbb  {P}}_{\epsilon}(E){\big )}\\&\leq
\limsup _{\epsilon\to 0}\epsilon\log {\big (}{\mathbb  {P}}_{\epsilon}(E){\big )}\leq -\inf _{{x\in {\bar  {E}}}}I(x).
\end{align}

The reason to work with a $\liminf$ and $\limsup$
is mainly technical, and often we can interpret Eq.~\eqref{eq: LDP-general} simply as $\mathbb{P}_{\epsilon}(E) \approx \exp\{-\inf _{x\in { {E}}}I(x)/\epsilon\}$.
That is, every path $x$ towards the rare event $E$ has a certain ``cost" $I(x)$,
and the LDP tells us  that, as $\eps \to 0$, the path with the smallest cost yields the largest probability and is therefore the most likely way for the event to happen.
For further background, and other engineering applications of LD we refer to~\cite{Bucklew1990}; for an introduction of large-deviations theory aimed at physicists, see~\cite{Touchette2009}.

This subsection is based on the
Freidlin-Wentzell theory (F-W,~\cite{Dembo1998}), which is concerned with large deviation principles for the \textit{paths} of a stochastic process.
Thanks to Theorem~$5.6.7$ in~\cite{Dembo1998}, the power injections process $X^\epsilon$ satisfies a sample
path large deviations principle (SPLDP) over $C_{\mu}([0,T])=\{g:
[0,T]\to \mathbb{R}^m : g \text{ is continuous and }g(0)=\mu\}$, with
good rate function
 \begin{equation}\label{eq: rate power}
   \IP(g)=
  \sum_{i=1}^m \IPi(g_i).
 \end{equation}
 Here, $g=(g_1,\ldots.g_m)$ and $\IPi$ is the good rate function for
 the SPLDP associated with the process $X_i^\epsilon(t)$,
 $i=1,\ldots,m$, and it is given by
 \[ \IPi(g_i)=\begin{cases}
   \frac{1}{2}\int_{0}^T \Bigl(\frac{g\pr_i-b_i(g_i)}{l_i(g_i)}\Bigr)^2dt&\mbox{if }g_i\in H^1_{\mu_i}(\mathbb{R}),\\
   \qquad\qquad\infty &\mbox{if }g_i\notin H^1_{\mu_i}(\mathbb{R}).\\
 \end{cases}
 \]
 Here, $H^1_{\mu}(\mathbb{R}^m):=\{g:[0,T]\to\mathbb{R}^m\,:\,g(t)=\mu
 +\int_{0}^t\phi(s)ds, \phi\in L_2([0,T])\}$ is the space of
 absolutely continuous functions with value $\mu$ at $0$ and which
 possess a square integrable derivative.
Next, we apply a very useful tool in LD theory, known as the
  \textit{Contraction Principle}, which allows to map large deviations
  principles from one space to another.  Thanks to the Contraction
Principle, Theorem $4.2.1$ in~\cite{Dembo1998} and Eq~\eqref{eq: DC
  norm current exp}, the current process $Y^\epsilon$ satisfies a
SPLDP with good rate function
\[\IC(f)=\inf_{\substack{g\in H^1_{\mu}:\\ y+Cg=f}}\IP(g).\]
Thanks to Lemma~\ref{lm: rank C}, the matrix $C$ has linear
independent columns. Therefore, its Moore-Penrose inverse has an
explicit formula $C^+=(C^{\top}C)^{-1}C^{\top}$ and it is a left inverse of
$C$. Thus, for $f\in y+ C(H^1_{\mu}(\R^m))\subset
H^1_{\nu}(\R^L)$ the equation $y+Cg=f$ has unique solution
$g=C^+(f-y)$, yielding
{\small
\begin{equation}\label{eq: rate current}
 \IC(f)=\begin{cases}
 \IP(C^+(f-y))\qquad &\text{if } f\in y+ C(H^1_{\mu}(\R^m)),\\
 \infty & \text{otherwise}.
 \end{cases}
\end{equation}
}
For the current overload event we then have that
\begin{align}
 \limsup_{\epsilon\to0}\epsilon\log \mathbb{P}(\|Y^{\epsilon}\|\ge1)= -\IC^*, \label{eq: LDP current}\\
  \IC^*=\inf_{\substack{f\in y+CH^1_{\mu}:\\ \|f\|\ge1}} \IC(f)=
\inf_{\substack{g\in H^1_{\mu}:\\ \|y+Cg\|\ge1}} \IP(g) \label{eq: rate* current},
\end{align}

with $\IC^*$ the \textit{decay rate} for the current overload event. \footnote{
Note that $f\in H^1_{\nu}\setminus (y+CH^1_{\mu})$ implies $\IC(f)=\infty$, thus $\IC^*=\inf_{\substack{f\in y+CH^1_{\mu}:\,
      \|f\|\ge1}} \IC(f)=\inf_{\substack{f\in H^1_{\nu}:\, \|f\|\ge1}} \IC(f)$}
\subsection{Capacity region based on current overload}\label{sc:
  capacity current}
Eq.~\eqref{eq: LDP current} yields the following approximation
for the current overload probability for small $\epsilon$:
\begin{equation}\label{eq: current approx}
\mathbb{P}(\|Y^{\epsilon}\|_{\infty}\ge1)\approx e^{-\IC^*(\overline{\mu})/\epsilon}.
\end{equation}
We use the above approximation to define the capacity region
for the power grid, based on the constraint that the probability of
current overflow must not exceed $p,$ where $p > 0$ is a small
pre-defined threshold:
\begin{equation}\label{eq: capacity current}
  \Rcur:=
  \{\overline{\mu}\in\mathbb{R}^N\,:\,\IC^*(\overline{\mu}) \geq -\eps\log(p)\}.
\end{equation}
In the remainder of this section, we shed light on structural
properties and computational aspects of this capacity region. Our
first result shows that the capacity region is convex with respect to
the deterministic power injections.

 \begin{lemma}\label{lm: convex current}
   $\Rcur$ is convex in the deterministic
   power injections vector $\mu_D$.
 \end{lemma}

Lemma~\ref{lm: convex current} is important as convexity enables the
set of allowable deterministic injections to be incorporated as a
constraint in OPF problems (see, for example,
\cite{Bienstock2014}). For the special case where power injections are
modeled as an Ornstein Uhlenbeck process, we show in Section~\ref{ss:
  OU current} that the capacity region
$\Rcur$ itself is convex. Letting
\[\psi_{\ell}=\inf_{\substack{f\in H^1_{\nu}:\\
    \|f_{\ell}\|_{\infty}\ge1}} \IC(f)=\inf_{\substack{g\in
    H^1_{\mu}:\\ \|y_{\ell}+C_{\ell} g\|_{\infty}\ge1}} \IP(g),\]
with $C_{\ell}$ being the the $\ell$-th row of matrix $C$, we note that
$\IC^*=\min_{\ell \in\mathcal{L\pr}}\psi_{\ell}$, where $\mathcal{L}\pr:=\{\ell \in\mathcal{L}\,:\, C_{\ell}\ne0\}$.
\footnote{ote that if $C_{\ell}=0$, then
$Y_\ell^{\epsilon}(t)=Y_\ell^{\epsilon}(0)=y_{\ell}$ is constant and
  $|y_{\ell}|=|\nu_{\ell}|<1$, yielding $\displaystyle\psi_{\ell}=\inf_{\substack{g\in H^1_{\mu}:\,\|y_{\ell}\|_{\infty}\ge1}} \IP(g)=\infty$.}.

In other words, the
decay rate for a current overload in the network is the minimum of the decay
rates corresponding to the overload of each line. Decay rates, together with Eq.\ref{eq: current approx}, provide an analytical tool to rank transmission lines in terms of their vulnerability~\cite{Nesti2018}.
The next lemma shows that the current overload on any link most likely
occurs at the end time.
\begin{lemma}
   \label{lm: overflow_boundary}
$\forall \,\ell \in \mathcal{L\pr},$
   $\displaystyle\psi_{\ell} = \inf_{\substack{g\in H^1_{\mu}:\,|y_{\ell}+C_{\ell} g(T)|=1}} \IP(g).$
 \end{lemma}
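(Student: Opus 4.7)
The inequality $\psi_\ell \le \inf\{\IP(g) : g \in H^1_\mu, |y_\ell + C_\ell g(T)| = 1\}$ is immediate, since the feasible set $\{g : |y_\ell + C_\ell g(T)| = 1\}$ is contained in $\{g : \|y_\ell + C_\ell g\|_\infty \ge 1\}$, so the infimum of $\IP$ over the smaller set is at least as large as over the larger one. The content of the lemma is the reverse inequality. The plan is to show that for any admissible $g \in H^1_\mu$ with finite action $\IP(g) < \infty$ and $\|y_\ell + C_\ell g\|_\infty \ge 1$, there is a ``delayed'' companion path $\tilde{g} \in H^1_\mu$ for which $|y_\ell + C_\ell \tilde{g}(T)| = 1$ exactly and $\IP(\tilde{g}) \le \IP(g)$. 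Taking infima over $g$ then yields the claimed equality.

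Given such a $g$, the map $t \mapsto y_\ell + C_\ell g(t)$ is continuous on $[0,T]$, starts at $\nu_\ell$ with $|\nu_\ell| < 1$, and satisfies $\max_t |y_\ell + C_\ell g(t)| \ge 1$; by the intermediate value theorem there is a first time $\tau \in (0,T]$ with $|y_\ell + C_\ell g(\tau)| = 1$. I would then define the delayed path by
\[
\tilde{g}(t) = \begin{cases} \mu, & t \in [0, T-\tau], \\ g(t - (T-\tau)), & t \in [T-\tau, T]. \end{cases}
\]
By construction $\tilde{g}(0) = \mu$, $\tilde{g}$ is absolutely continuous with square-integrable derivative (so $\tilde{g} \in H^1_\mu$), and $|y_\ell + C_\ell \tilde{g}(T)| = |y_\ell + C_\ell g(\tau)| = 1$.

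The cost comparison uses the standing assumption $b_i(\mu_i) = 0$ in an essential way. On $[0, T-\tau]$, $\tilde{g}_i(t) \equiv \mu_i$, so $\tilde{g}_i'(t) = 0 = b_i(\mu_i) = b_i(\tilde{g}_i(t))$ and the integrand in $\IPi(\tilde{g}_i)$ vanishes identically on this subinterval. On $[T-\tau, T]$, a change of variables $s = t - (T-\tau)$ shows that the contribution equals the action of $g_i$ on $[0,\tau]$. Hence
\[
\IP(\tilde{g}) = \frac{1}{2}\sum_{i=1}^m \int_0^\tau \left( \frac{g_i'(s) - b_i(g_i(s))}{l_i(g_i(s))} \right)^2 ds \le \IP(g),
\]
the final inequality following from $\tau \le T$ and the nonnegativity of the integrand. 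Passing to the infimum over $g$ establishes the lemma.

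I expect no serious obstacle: the construction is explicit, and the three ingredients it relies on---the continuity of paths in $H^1_\mu$, the equilibrium property $b_i(\mu_i) = 0$ built into the assumptions on the SDE coefficients, and the nonnegativity of the Freidlin--Wentzell rate density---are all in place. One minor point of care is to restrict attention to $g$ with $\IP(g) < \infty$ before invoking continuity, which is harmless because paths of infinite action contribute nothing to the infimum defining $\psi_\ell$.
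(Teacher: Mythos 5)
Your proof is correct and takes essentially the same route as the paper's own argument: the trivial inclusion gives one inequality, and the reverse follows from the identical time-shift construction of a delayed path that rests at $\mu$ (incurring zero cost because $b_i(\mu_i)=0$) until time $T-\tau$ and then replays $g$ up to its first hitting time of the boundary.
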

 For
$a \neq \nu_{\ell},$ define
  \begin{equation}\label{eq:variational_prob}
  \psi_{\ell}^{(a)} = \inf_{\substack{f\in y+CH^1_{\mu}:\, f_{\ell}(T)=a}} \IP(f),
  \end{equation}so that $\psi_{\ell}
  =\psi_{\ell}^{(1)}\wedge\psi_{\ell}^{(-1)}=\min{\psi_{\ell}^{(1)},\psi_{\ell}^{(-1)}}$ and
  \begin{equation}
    \label{eq:curr_dr_1}
    \IC^*=\min_{\ell
      \in\mathcal{L}\pr}\psi_{\ell}^{(1)}\wedge\psi_{\ell}^{(-1)}.
  \end{equation}
Equation~\eqref{eq:curr_dr_1} allows us to rewrite the capacity
region as
{\small
    \begin{equation}
    \label{eq: current_rewritten}
    \Rcur=
    \bigcap_{\substack{\ell \in\mathcal{L}\pr,\,a \in
        \{-1,1\}}}\{\overline{\mu}\in\mathbb{R}^N\,:\,\psi_{\ell}^{(a)}
    \geq -\eps\log(p)\}.
    \end{equation}
   }
    Thus, obtaining the capacity region
    $\Rcur$ hinges on computing
    $\psi_{\ell}^{(a)},$ which by definition is the solution of \eqref{eq:variational_prob}. To solve this variational problem with boundary constraints, one can for instance use the Euler - Lagrange equation (see also our discussion in Section~\ref{sc: numerics}).
For simple diffusion models, this approach can be used to obtain the
optimal path and $\psi_{\ell}^{(a)}$ in closed form, leading to an
explicit characterization of the capacity region
$\Rcur.$ Next, we illustrate this for the case where the power injections are modeled as a OU process.
\subsection{Explicit computations for Ornstein-Uhlenbeck process}\label{ss: OU current}
In this section we suppose that the power injections $X^{\epsilon}(t)$
follow a multivariate Ornstein-Uhlenbeck (OU) process, which
  is the most tractable example of an SDE and is, in particular, a
  Gaussian process.\footnote{The gaussianity assumption for
      wind power is debatable. While consistent with atmospheric
      physics~\cite{Bienstock2014} and recent wind park
      statistics~\cite{Kolumban2017,Berg2016}, different models are
      preferred for different
      timescales~\cite{MilanWaechterPeinke2013}.}
Such a process is of the form
 \begin{equation}\label{eq: OU current expl}
    dX^\epsilon(t)=D(\mu-X^\epsilon(t))dt+\sqrt{\epsilon}LdW(t)
  \end{equation}
i.e. the functions
$b(\cdot)$ and $L(\cdot)$ in the SDE~\eqref{eq: SDE} are $
b(x)=D(\mu-x)$ and $L(x)=L$,
where $D=\diag(\{\gamma_i\}), L=\diag(
\{l_i\})$, and $\gamma_i,l_i>0$ for all
$i=1,\ldots,m.$ For this model, the capacity region can be expressed
in closed form, as shown in the next Proposition.\footnote{Note that our framework allows to extend Proposition~\ref{pr: OU_regions} to mixtures of OUs, providing flexibility to the modeler while keeping the benefits of closed-form expressions.}
\begin{proposition}\label{pr: OU_regions}
  If $X^{\epsilon}(t)$ is defined by~\eqref{eq: OU current expl}, then
 \begin{equation*}\label{eq: capacity current OU}
 \begin{split}
   \Rcur=\bigcap_{{\ell}\in\mathcal{L\pr}}
   \Bigl\{\overline{\mu}\in\mathbb{R}^N\,:\,|\nu_{\ell}| \leq 1-\sqrt{\eps\log(1/p)C_{\ell}M_TC_{\ell}^{\top}}\Bigr\}.
 \end{split}
 \end{equation*}
  In the particular case $D=\gamma I$, Eq.~\eqref{eq: capacity current OU} simplifies to
  \begin{equation}\label{eq: capacity current OU_simplified}
 \begin{split}
    \Rcur=\bigcap_{{\ell}\in\mathcal{L}\pr}
 \Bigl\{\overline{\mu}\in\mathbb{R}^N\,:\,|\nu_{\ell}| \leq 1-\beta_{\ell}\Bigr\}
 \end{split}.
 \end{equation}
Here, $M_t=L^2D^{-1}(I-e^{-2Dt})e^{D(t-T)}$ and
\[\beta_{\ell}:=\sqrt{\frac{(1-e^{-2\gamma T})\eps\log(1/p)\sigma_{\ell}^2}{\gamma}},\quad \sigma_{\ell}^2:=C_{\ell}L^2C_{\ell}^{\top}.\]
\end{proposition}
We make the following remarks regarding Proposition~\ref{pr: OU_regions}: (i)  $\Rcur$ is a closed convex
set; in particular, it is a polyhedron in $\mathbb{R}^N$. We note that
this property enables us to incorporate the capacity region in OPF problems.
(ii) $\beta_{\ell}$ is a strictly
  decreasing function of $\gamma,$ implying that $\Rcur$ shrinks as $\gamma$ becomes smaller. This is intuitive, since for
  small values of $\gamma,$ the OU process will revert to its
  long-term mean $\mu$ with less force;
  (iii)
the longer the time $T$ between two control instants, the greater the probability that the  fluctuations in the power injections will result in an overload, yielding a smaller $\Rcur$;
 (iv) the expression for $\Rcur$
encloses in a single formula the dependency on the initial condition $\nu$, on the window length $T$, and on the topology of the
  network, the physical properties of the transmission lines and the
  evolution of the stochastic power injections, encoded in the matrices $C,L,D$.
\section{Capacity regions characterization based on temperature
  overload}\label{sc: temperature}
%


Since temperature responds gradually to current, a current overload of
a short duration does not necessarily imply an overload in
temperature.
By explicitly capturing the
transient relationship between temperature and current, we can enlarge the conservative capacity region obtained in Section~\ref{sc: current}.
In the following, we first provide a large deviations principle for the temperature overload event
$\mathbb{P}(\|\Theta^{\epsilon,\tau}\|_{\infty}\ge1).$ Then, we define the temperature-overload based capacity region and prove a convexity result for it, analogous to
the result in Section~\ref{sc: current}.

However, due to the non-local in time relationship between current and temperature, the decay rate for the temperature process is hard to compute explicitly. As a result, the capacity region cannot be expressed in closed form for even the simplest diffusion models. To address this issue,
we develop two approximations of the capacity region: the first is an
inner bound, while the second is based on a first order Taylor expansion of the decay rate around $\tau = 0.$ These approximations have the following appealing
properties, which make them amenable to application in OPF
formulations. Firstly, both approximations are supersets of
the current-based capacity region $\Rcur$. Secondly, they have the same computational
complexity as $\Rcur.$ Thirdly, for the
special case where the stochastic power injections are modeled by an
OU process, both regions can be expressed in closed
form (Subsections~\ref{ss:LB OU},~\ref{ss: OU taylor}). Finally, both approximations are convex over the deterministic power injections.
\subsection{Capacity region based on temperature overload}\label{sc:temperature}
Thanks to the relationship~\eqref{eq: ode}, the contraction principle yields that
$\Theta_{\epsilon,\tau}$ satisfies a SPLDP with good rate function
\begin{equation}\label{eq: rate temperature}
 \IT(h)=\inf_{\substack{f\in H^1_{\nu}:\\ \xi_{\tau}(f)=h }} \IC(f)=
 \inf_{\substack{f\in y+CH^1_{\mu}:\\ \xi_{\tau}(f)=h }} \IC(f).
\end{equation}
For the temperature overload event we thus have
 \begin{align}
 & \limsup_{\epsilon\to0}\epsilon\log \mathbb{P}(\|\Theta^{\epsilon,\tau}\|_{\infty}\ge1) \le -\IT^*, \label{eq: LDP temperature}\\
  &\IT^*=\inf_{\substack{ h\in\xi_{\tau}(H^1_{\nu})\\ \|h\|\ge1}} \IT(h), \label{eq: rate* temperature}
\end{align}
where $\IT^*$ is the \textit{temperature decay rate}. Letting, for $\ell\in \mathcal{L}\pr$,
 \begin{equation}\label{eq: variational problem temperature}
 \omega_{\ell} = \inf_{\substack{h\in\xi_{\tau}(H^1_{\nu}):\\ \norm{h_{\ell}} \geq 1}} \IT(h)=
 \inf_{\substack{g\in H^1_{\mu}:\\ \norm{\xi_{\tau_\ell}(y_{\ell}
       +C_{\ell}g)} \geq 1}} \IP(g),
       \end{equation}
 we see that the decay rate for the temperature is
$\IT^* = \min_{{\ell} \in \mathcal{L}\pr} \omega_{\ell}.$ Note that $\omega_{\ell}$ and $\IT^*$ depend on
  $\overline{\mu},\tau$ and $T$.  As before, Eq.~\eqref{eq: LDP
    temperature} yields the following approximation for the rare event
  probability, for small $\epsilon$:
\begin{equation}\label{eq: temperature approx}
\mathbb{P}(\|\Theta^{\epsilon,\tau}\|_{\infty}\ge1)\approx e^{-\IT^*(\overline{\mu})/\epsilon}.
\end{equation}
This leads to the following definition of the capacity region
 \begin{equation}\label{eq: capacity temperature}
 \begin{split}
    \Rtemp&:=
      \{\overline{\mu}\in\mathbb{R}^N\,:\,\IT^*(\overline{\mu})\ge-\eps\log(p)\}\\
  &= \bigcap_{l\in\mathcal{L}\pr}\{\overline{\mu}\in\mathbb{R}^N\,:\,\omega_{\ell}(\overline{\mu})\ge-\eps\log(p)\}.
  \end{split}
 \end{equation}
 We have the following convexity result:
\begin{lemma}\label{lm: convex temperature}
 $\Rtemp$ is convex in the deterministic power injections vector $\mu_D$.
 \end{lemma}

The variational problem for the temperature overload~\eqref{eq: rate*
  temperature} is difficult to solve in general, and numerics can also
prove to be challenging.
Motivated by this difficulty, in the next subsection we develop approximations for the temperature decay rate, and the corresponding capacity regions, by reducing ~\eqref{eq: rate*  temperature} to the easier problem~\eqref{eq:variational_prob}.
\subsection{Inner bound for the capacity region}
\label{sc:temp_inner_bound}
In this section, we develop an inner bound for the capacity region
$\Rtemp$ which is larger
than the capacity region $\Rcur$ based on
current overload, and thus captures some of the benefit of
incorporating temperature dynamics. Define
\[\IT^{(LB)}:=\min_{{\ell}\in\mathcal{L}\pr}\psi_{\ell}^{(\alpha_{\ell})}\wedge
\psi_{\ell}^{(-\alpha_{\ell})}.\] The next lemma shows that $\IT^{(LB)}$ is a lower bound for the temperature decay rate, i.e. $\IT^*\ge\IT^{(LB)}.$
\begin{lemma}
  \label{lm: temp_bound}
For all $\ell\in\mathcal{L\pr}$, we have
$\omega_{\ell} \geq \psi_{\ell}^{(\alpha_{\ell})}\wedge\psi_{\ell}^{(-\alpha_{\ell})},$
where $\alpha_{\ell} = \sqrt{\frac{1-\nu_{\ell}^2 e^{-T/\tau_{\ell}}}{1-e^{-T/\tau_{\ell}}}}.$
\end{lemma}
The capacity region based on the lower bound $\IT^{(LB)}$ is
\begin{equation*}
 \begin{split}
   \Rlb :=
   \{\overline{\mu}\in\mathbb{R}^N\,:\,\IT^{(LB)}(\tau,\overline{\mu}) \geq -\eps\log(p)\}
  \end{split}
 \end{equation*}
 The following proposition states that the capacity region based on the lower bound, while being an inner approximation of the actual temperature-based region, is
 less conservative than the current-based capacity constraint.
\begin{proposition}\label{pr: LB}
  $\IC^*\le\IT^{(LB)}\le\IT^*$ and
  $\Rcur\subseteq\Rlb
  \subseteq \Rtemp$ for all
  $\tau\ge0$.
\end{proposition}
As a consequence, using
 $\Rlb$ over
 $\Rcur$ allows for larger power
 injections values (i.e., less curtailment), while still bounding the probability of a temperature overload and without additional computational burden.
 Finally, we note that the inner bound satisfies the
 following convexity property.
 \begin{lemma}\label{lm: convex lb}
   $\Rlb$ is convex in the
   deterministic power injections vector $\mu_D$.
 \end{lemma}
 The proof goes along the same lines of the proofs of Lemmas
 \ref{lm: convex current} and \ref{lm: convex temperature} and is therefore omitted.
 \subsection{Taylor approximation of the decay rate and corresponding capacity regions}
 \label{taylor_general}
 In this section we derive a heuristic approximation for the temperature decay rate
\begin{equation}
\IT^*=
 \inf_{\substack{h\in \xi_{\tau}(y+CH^1_{\mu}),\,\|h\|\ge 1 }} \IT(h),
\end{equation}
based on a Taylor expansion around $\tau=0$. First, write the temperature rate function~\eqref{eq: rate temperature} as
{\small \begin{align*}
&\IT(h)= \begin{cases}
G(\tau,h)\quad &\text{if } h\in \xi_{\tau}(y+C H_{\mu}^1),\\
\infty &\text{otherwise},
\end{cases}
 \end{align*}}
where $G(\tau,h)$ is defined explicitly in the Appendix.

\begin{Taylor approximation}\label{th: taylor}
Let $f_*$ be the optimal current path to overflow. For small $\tau$, we will use the approximation
\begin{equation}\label{eq: taylor first order}
 \IT^*\approx\IT^{(TL)}:=\IC^*+\tau \cdot\nabla_{\tau}G(\tau,f_*^2)|_{\tau=0},
\end{equation}
where $G$ is defined in
\noindent If $\tau$ is of the form $\tau=\tau_0(1,\ldots,1)^{\top}$, $\tau_0>0$, we obtain the closed-form expression
\begin{equation}
\label{eq:closed_form}\IT^{(TL)}=\IC^*+\tau_0\Phi_{f_*}\end{equation}
where
{\small\begin{align}\label{eq:calculationstaylor}&\Phi_{f_*}:=\sum_{i=1}^m\Bigl[ K_i(f_*(T),f_*\pr(T))-K_i(f_*(0),f_*\pr(0))\Bigr],\\
&K_i(f_*(t),f_*\pr(t)):=
	      \frac{1}{2} \Bigl(\frac{C_i^+f_*\pr(t)-b_i(C_i^+(f_*(t)-y))}{l_i(C_i^+(f_*(t)-y))}\Bigr)^2.
\end{align}}
In particular the approximation $\IT^{(TL)}$ depends only on the current decay rate $\IC^*$ and on the values
$f_*(0),(f_*)\pr(0),f_*(T),(f_*)\pr(T)$.
\end{Taylor approximation}

 The heuristic is motivated by the formal Taylor expansion of
 $I_{t,\tau}^*$ around $\tau=0$,i.e. $\mathcal{I}_{t,0}^*+\tau \cdot\nabla_{\tau}I_{t,\tau}^*|_{\tau=0}+o(\tau).$
If $\tau=0$, the optimal temperature path to overflow is
$h_*=(f_*)^2$, so
$\IT^*=\ICS(h_*)=
 \IC(f_*)=\IC^*
,$
and the substitution of $\nabla_{\tau}G(\tau,f_*^2)|_{\tau=0}$ for
$\nabla_{\tau}\IT^*|_{\tau=0}$ is motivated by an
infinite-dimensional version of Danskin`s Theorem~\cite{Bonnans2000}, Proposition $4.13$. To make this rigorous is quite challenging, as the feasible sets in our variational problem depend in a rather intricate way on $\tau$. The explicit calculations for the case $\tau=\tau_0(1,\ldots,1)^{\top}$, $\tau_0>0$, are reported in the appendix.

Eq.~\ref{eq:closed_form} provides an approximation of the temperature decay rate which depends only on
the current decay rate and the corresponding optimal path, which are generally easier to obtain.
The capacity region corresponding to the Taylor approximation is
{\small\begin{equation*}\label{eq: temp capacity regions general}
\begin{split}
    \Rtaylor:=
  \{\overline{\mu}\in\mathbb{R}^N\,:\,\IC^*(\bar{\mu})+\tau_0\Phi_{f_*}\ge-\eps\log(p)\}.
\end{split}
  \end{equation*}
}
In section
\ref{ss: OU taylor} we will see that in the OU case the inequality $\IT^{(TL)}\ge\IC^*$ holds, and thus $\mathcal{\tilde{R}}^{(t,\tau,TL)}_{\eps,p}\supseteq
  \Rcur$,
confirming the intuition that the temperature-based approach is less conservative than the current-based one.

 \subsection{Explicit computations for OU: lower bound}\label{ss:LB OU}
  In this section we assume that the power injection process $X^{\epsilon}(t)$ follows the OU process~\eqref{eq: OU current expl}, and we explicitly compute the lower bound $\IT^{(LB)}$ and the corresponding capacity region $\Rlb$.
\begingroup
\allowdisplaybreaks
   \begin{proposition}\label{pr: delta,eta}
  If $X^{\eps}(t)$ is defined by~\eqref{eq: OU current expl}, then
 \begin{align*}
&\IT^{(LB)}=\min_{\ell\in\mathcal{L}\pr}\frac{(\alpha_{\ell}-|\nu_{\ell}|)^2}{C_{\ell}M_TC_{\ell}^{\top}},\\
   & \Rlb=
  \bigcap_{\ell \in\mathcal{L\pr}}\{\overline{\mu}\in\mathbb{R}^N:
 \frac{(\alpha_{\ell}-|\nu_{\ell}|)^2}{C_{\ell}M_TC_{\ell}^{\top}}\ge-\eps\log(p)\},
 \end{align*}
where {\small\[\alpha_{\ell} = \sqrt{\frac{1-\nu_{\ell}^2 e^{-T/\tau_{\ell}}}{1-e^{-T/\tau_{\ell}}}},\quad
M_t=L^2D^{-1}(I-e^{-2Dt})e^{D(t-T)}.\]}
 In the particular case $D=\gamma I$, we have
 \begin{equation}\label{eq: capacity LB OU unif}
 \begin{split}
    \Rlb:=
  \bigcap_{\ell \in\mathcal{L\pr}}&\{\overline{\mu}\in\mathbb{R}^N\,:\,
|\nu_{\ell}|\le\delta_{\ell}\}
   \end{split},
 \end{equation}
 {\small
 \begin{align*}\label{eq: delta,eta}
 &\delta_{\ell}=
\sqrt{1-\eta_{\ell}^2e^{-T/\tau_{\ell}}(1-e^{-T/\tau_{\ell}})} - \eta_{\ell}
(1-e^{-T/\tau_{\ell}}),\\
& \eta_{\ell} :=
\sqrt{\frac{\epsilon \log(1/p)\sigma^2_{\ell}(1-e^{-2\gamma T})}{\gamma}} <
1,\,\,\sigma_{\ell}^2=C_{\ell}L^2C_{\ell}^{\top}.
\end{align*}
}
  \end{proposition}
\endgroup
If $D=\gamma I$, we see from Prop.~\ref{pr: delta,eta} that $\Rlb$ is a convex polyhedron in $\mathbb{R}^N$, as in the case of the current region, and is in particular a scaled version of the polyhedron $\Rcur$. Moreover, $\delta_{\ell}\in(1-\eta_{\ell},1),
\delta_{\ell}\xrightarrow[]{\tau\to\infty} 1$ and $\delta_{\ell}\xrightarrow[]{\tau\to 0} \eta_{\ell}$. This means that, as  $\tau$ increases, the capacity region~\eqref{eq: capacity LB OU unif}
gets closer to the larger region $\{\overline{\mu}\in\mathbb{R}^N\,:\,\|\nu\|_{\infty}<1\}$,
which is the stability region for a deterministic system. On the other hand, as $\tau\to 0$, the region in~\eqref{eq: capacity LB OU unif} boils down to the smaller current-based capacity region given in~\eqref{eq: capacity current OU}.
    \subsection{Explicit computations for OU: Taylor approximation}\label{ss: OU taylor}
    In this section we consider again the OU process $X^{(\epsilon)}$~\eqref{eq: OU current expl} in the particular case $D=\gamma I$, and we develop
      the capacity regions based on the Taylor approximation~\ref{th: taylor}.

    \begin{proposition}\label{pr: OU_taylor}
    For  $\tau=\tau_0(1,\ldots,1)^{\top}$ we have
{\small
\begin{equation*}\label{eq: taylor first order OU}
\begin{split}
\IT^{(TL)}=(1+2\tau_0\gamma)\,\IC^*(\bar{\mu})=
 (1+2\tau_0\gamma)\min_{\ell\in\mathcal{L\pr}}\frac{(1-|\nu_{\ell}|)^2}{C_{\ell}M_TC_{\ell}^{\top}}
 ,
\end{split}
\end{equation*}
\begin{equation*}
\begin{split}
    \Rtaylor
    =&\bigcap_{{\ell}\in\mathcal{L\pr}}\Bigl\{\overline{\mu}\in\mathbb{R}^N\,:\,|\nu_{\ell}|\le 1-\eta_{\ell}/\sqrt{1+2\tau_0\gamma}\Bigr\},
\end{split}
\end{equation*}
}
   \end{proposition}

It is clear that $\mathcal{\tilde{R}}^{(t,t,\tau_0L)}_{\eps,p}$ is a convex polyhedron, as it was the case for the current region $\Rcur$ and the
lower bound region $\mathcal{\tilde{R}}^{(t,\tau_0,LB)}_{\eps,p}$.
Moreover, since $1+2\tau_0\gamma>0$, we see that $\Rtaylor\supseteq \Rcur$ and in particular
$\Rtaylor$ is a re-scaled version of $\Rcur$. Recall that this was also the case for the lower bound capacity region: the difference is that, while the lower bound holds for every $\tau>0$, the approximation $\IT^{(TL)}$
is good only for small $\tau_0$.
In general,
$\Rtaylor$ and $\Rlb$ are not subsets of each other.

\section{Numerics}\label{sc: numerics}

In order to compute the temperature decay rate $\IT^*$, one has to solve the variational problem in Eq.~\eqref{eq: variational problem temperature}, which is computationally harder than the one for the current in Eq.~\eqref{eq:variational_prob}, due to the integral mapping in Eq.~\eqref{eq: xi}.

  On the other hand, the theory we presented enables us to reduce the computation of the decay rates
$\IT^{(LB)},\IT^{(TL)}$ to the easier variational problem for $\IC^*,$ capturing the benefits of incorporating the temporal dynamics between current and temperature without additional cost. Variational problems like~\ref{eq:variational_prob}, which are based on F-W theory, are well studied in the literature, and when closed-form expression are not available efficient numerical algorithms have been developed~\cite{Heymann2008}.

In the next subsections we apply our theory to derive the capacity
regions for two IEEE test cases, and we quantify the capacity gains
achieved by $\Rlb,\Rtaylor$ over $\Rcur$ assuming an OU model for
power injections. The code used to produce Figures~\ref{fig:14bus apart},\ref{fig:118bus} is available at \href{https://github.com/TommasoNesti/Temperature-Overloads}{\textit{github.com/TommasoNesti/Temperature-Overloads}}.

\paragraph*{Scalability} Thanks to the analytic characterization of capacity regions for the OU model, our approach is fully scalable and can effortlessly be applied to larger power networks, as there is virtually no computational burden in computing $\Rcur$ and, therefore, all the other capacity regions. For a detailed analysis on computational costs for solving~\ref{eq:variational_prob} for a general SDE, the interested reader is referred to~\cite{Heymann2008}, Section $3.3$.

 \subsection{IEEE-\texorpdfstring{$14$}{} test network}
In this section we develop capacity regions for the IEEE-14 test network, corresponding to the test case \textit{case14} in~\cite{Zimmerman2011}.

The grid consists of $14$ nodes and $20$ lines, and the original test case has constant deterministic power injections $P_{D}\in\mathbb{R}^{14}$, expressed in the \textit{per-unit} system.
We replace two of the deterministic injections (nodes $2$ and $13$) by OU processes with long term mean equal to the original deterministic power injection, and we assume we control the injections at nodes $6$ and $9$.
The test case reports the parameters $P_D$, $w_{ij}$ and $\widetilde{C}$, but does not include line limits, which we define as follows. For each line $\ell$ we set the maximum permissible current $I_{\ell,\max}=K|I_{\ell,\text{nom}}|$, $I_{\ell,\text{nom}}$ being the nominal current in line $\ell$ obtained from $P_{D}$ via the DC power flow equation, and $K=1.5$. We set $T=1,\gamma_i=1,l_i^2=10,\epsilon=0.25$ and $\tau=0.5$.

We compute two-dimensional capacity regions, which correspond to the amount of power that can be injected at the controllable sources so that the probability of overload in $[0,T]$ is sufficiently small.  The current-based capacity region is
\begin{align*}
&\Rcur=\{(\overline{\mu}_6,\overline{\mu}_9)\in\mathbb{R}^2\,|,\,
\overline{\mu}=(P_{D,2},\ldots,P_{D,5},\overline{\mu}_6,P_{D,7},\\
&\qquad\qquad P_{D,8},\overline{\mu}_9,P_{D,10},\ldots,P_{D,14}),\,\IC(\overline{\mu})\ge-\epsilon\log(p)\},
\end{align*}
and the other regions are defined similarly.

In Figures~\ref{fig:14bus aparta},~\ref{fig:14bus apartb} the $2$-dimensional capacity regions $\Rcur,\Rlb,\Rtaylor$ (denoted as $\tilde{\mathcal{R}}^{(\text{cur})},\tilde{\mathcal{R}}^{(\text{LB})},\tilde{\mathcal{R}}^{(\text{TL})}$ in the legend) are shown for two different
 target probabilities $p$, together with the region corresponding to a deterministic system
 \begin{equation}
 \Rdet=\{(\overline{\mu}_6,\overline{\mu}_9) \in\mathbb{R}^2\,|\, |\nu_{\ell}|\le 1 \quad
 \forall \ell=1,2,3\}
 \end{equation}

In particular, Figure \ref{fig:14bus apartb} shows that for $p=10^{-7}$ the lower bound region $\Rlb$ is more
  than two times bigger than $\Rcur$, and the Taylor region $\Rtaylor$
  is approximately two times bigger than $\Rlb$. This result suggests that for small target probabilities,
 the temperature-based approach yields a significative capacity gain.

Another application of the proposed methodology is the identification of the most vulnerable parts of the grid. For a given value of $\overline{\mu}$, let
$\ell^*(\overline{\mu}):=\text{argmin}_{\ell\in\mathcal{L}\pr}\psi_{\ell}^{(1)}(\overline{\mu})
  \wedge\psi_{\ell}^{(-1)}(\overline{\mu})$
   denote the line with the highest chance of overloading~\eqref{eq:curr_dr_1}, and, for a line $k\in\mathcal{L}\pr$, define
\begin{align}\label{eq:eq:partition}
 S_{k}:=\{(\overline{\mu}_6,\overline{\mu}_9)\in\mathbb{R}^2\,|\, \|\nu\|\le1, \ell^*(\overline{\mu})=k\}\subset \Rdet.\end{align}
 
 The region $S_{k}\subset \Rdet$ characterizes the \textit{controllable} power injections such that, in the event of large fluctuations of stochastic power injections, line $k$ is the most likely line to overload.
 The $S_k$-s partition $\Rdet$ in several subregions, as shown in Figure~\ref{fig:14bus apartc}.
Such characterization can help detecting the most vulnerable components of the grid: in this case, line $(12,13)$, corresponding to the biggest sub-region in Fig.~\ref{fig:14bus apartc}. Finally, Fig.~\ref{fig:14bus apartd} shows the topology of the network.

\begin{figure}[h!]
  \begin{subfigure}[t]{0.45\columnwidth}
    \includegraphics[width=0.95\columnwidth]
 {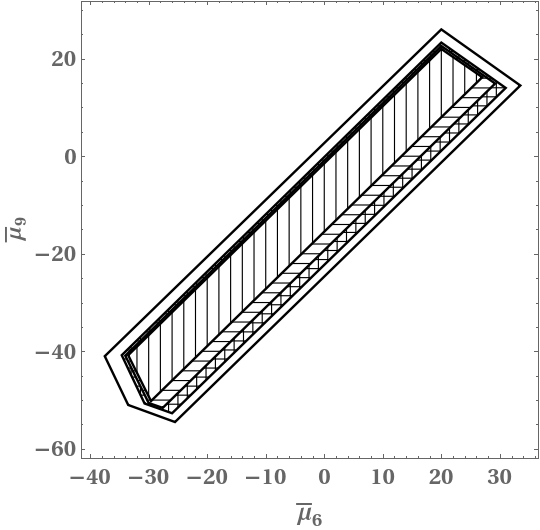}
     \caption{\footnotesize{Target probability $p=10^{-4}$.}}
   \label{fig:14bus aparta}
  \end{subfigure}
  \hspace{-1cm}
\begin{subfigure}[t]{0.55\columnwidth}
          \includegraphics[width=0.95\columnwidth]
 {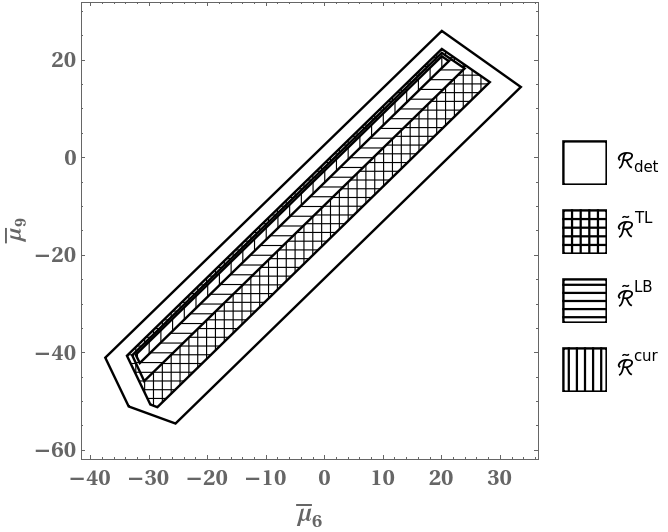}
          \caption{\footnotesize{Target probability $p=10^{-7}$.}}
\label{fig:14bus apartb}
  \end{subfigure}
   \begin{subfigure}[t]{0.55\columnwidth}
          \includegraphics[width=1\columnwidth]{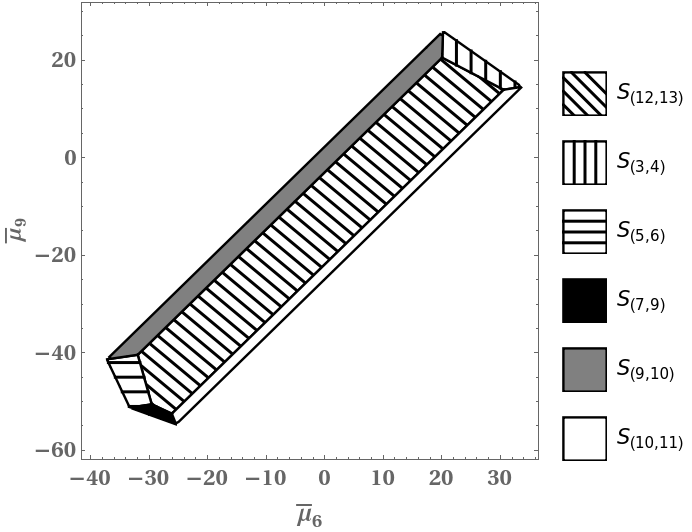}  
               \caption{\footnotesize{Partition of $\Rdet$.}}
               \label{fig:14bus apartc}
      \end{subfigure}
         \begin{subfigure}[t]{0.40\columnwidth}
    \includegraphics[width=1\columnwidth]{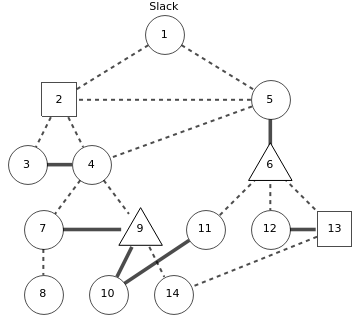} 
          \caption{\footnotesize{Network topology.}}
          \label{fig:14bus apartd}
    \label{fig:2}
  \end{subfigure}
   \caption{\footnotesize{ a,b) Capacity regions for the IEEE $14$-bus network, depicted using different mesh styles, for two different target probabilities;
   c) Subdivision of $\Rdet$ according to which lines are the most vulnerable, as in Eq.~\eqref{eq:eq:partition}; d) IEEE-14 topology. Stochastic and (deterministic) controllable nodes are represented with square and triangular vertexes, respectively. The six solid lines are the most vulnerable ones.}}
\label{fig:14bus apart}

\end{figure}

\subsection{IEEE-\texorpdfstring{$118$}{} test network}
\label{ss: 118}
In this section we perform a case study on a larger system, corresponding to the test case \textit{c118swf.m}~\cite{Murillo2013}.  
The system has $118$ nodes, $210$ lines and $52$ generators, $11$ of which are modeled as wind units (indexed by $j_1,\ldots,j_{11}$).
In order to simulate a more heavily loaded system, we define $I_{\max}$ to be equal to $50\%$ of the line limits provided in the test case.

For our study, we first solve a DC OPF~\cite{Sun2010}, which is an optimization problem determining the generation schedule that minimizes the total system generation cost, while satisfying demand/supply balance and network physical constraints, under the assumptions of the DC approximations.
Let $\bar{\mu}\in\R^{118}$ be the resulting optimal net power injections vector.

Next, we model the $11$ wind generators as OU processes\ignore{(Eq.~\eqref{eq: OU current expl});}, using the hour as the unit for temporal quantities. The parameter $\mu_{k}$ of generator $j_k$ is set to be equal to $\bar{\mu}_{j_k}$, which is interpreted as the \textit{nominal power injection} of generator $j_k$.

The parameters $\eps$, $D=\gamma  I, L= \diag(\{l_i\})$ and $T$ are calibrated in such a way that the standard deviation of each OU process at the end time $T$
matches realistic values for wind power forecasting error (expressed as a fraction of the wind plant installed capacity) over different control periods:
\begin{equation}
\label{eq: OU std}
\text{std}_{j_k}(T)=\sqrt{\frac{\eps l_k^2}{2\gamma}(1-e^{-2T\gamma})}=q(T)\cdot  \mu_{j_k}^{\text{(installed)}}.
\end{equation}
Given $T$, we set $q=q(T),\gamma=1,\eps=1$ and solve Eq.~\ref{eq: OU std} for $l_k$. The values for $q(T)$, shown in Fig.~\ref{fig:118bus_15},\ref{fig:118bus_60}, are taken from~\cite{Hodge2011}, and correspond to the Root Mean Squared Forecast Error obtained applying a persistence forecast to ERCOT wind data. Note that this setting can capture renewable generators with different installed capacities. The overload probabilities are chosen in the range $[10^{-7},10^{-1}]$, and $\tau=0.5$ hours.

To quantify the capacity gain achieved by the different regions, for
each choice of the parameters we solve three distinct DC OPFs, each
incorporating a different capacity region $\mathcal{R}$ in the
constraints. Note that since the capacity regions are convex
polytopes, solving these OPFs has the same computational cost as
solving the deterministic one.

Next, we compare the total system costs, which is the value of the objective function at optimality, to the cost obtained by solving the deterministic OPF (that is, the one incorporating $\Rdet$ in the constraints), by means of the \textit{Cost of Uncertainty} ($\text{CoU}$) metric
\begin{align}\label{eq:CoU}
&\text{CoU}^{(\mathcal{R})}(q,p)=\frac{\text{cost}^{\mathcal{R}}(q,p)-\text{cost}^{\text{det}}}{\text{cost}^{\text{det}}}\ge 0,
\end{align}
defined as the relative increase in system costs when uncertainty-aware reliability constraints are considered.
Figure~\ref{fig:118bus} reports $\text{CoU}(q,p)$ for various values of $q$ and $p$.


\begin{figure}[h!]
  \begin{subfigure}[t]{0.45\columnwidth}
    \includegraphics[width=1.1\columnwidth]{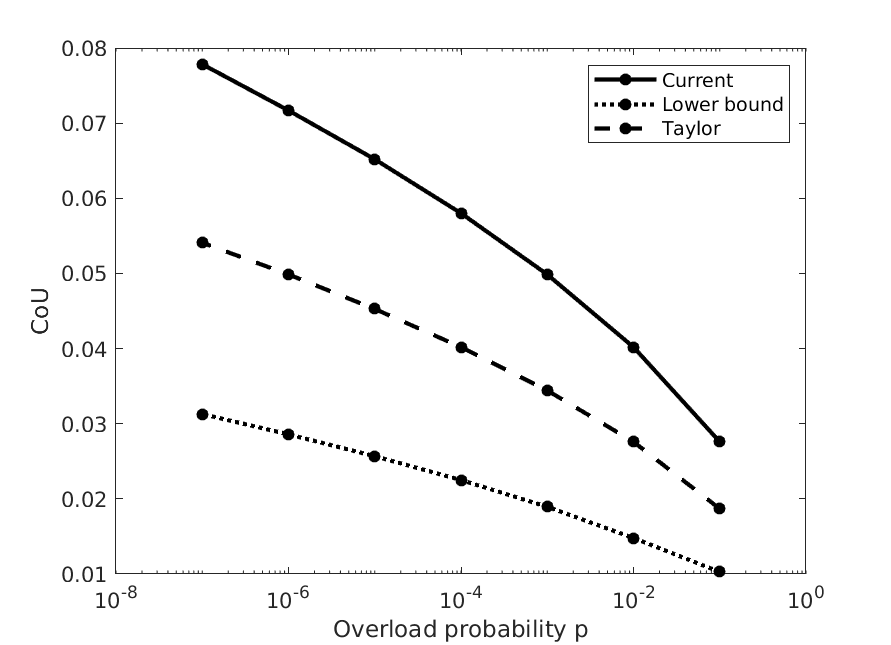} 
     \caption{\footnotesize{$T=1/4$ ($15$ minutes), $q(T)=0.018$.}}
\label{fig:118bus_15}
  \end{subfigure}
  \hspace{0.2cm}
\begin{subfigure}[t]{0.45\columnwidth}
          \includegraphics[width=1.1\columnwidth]{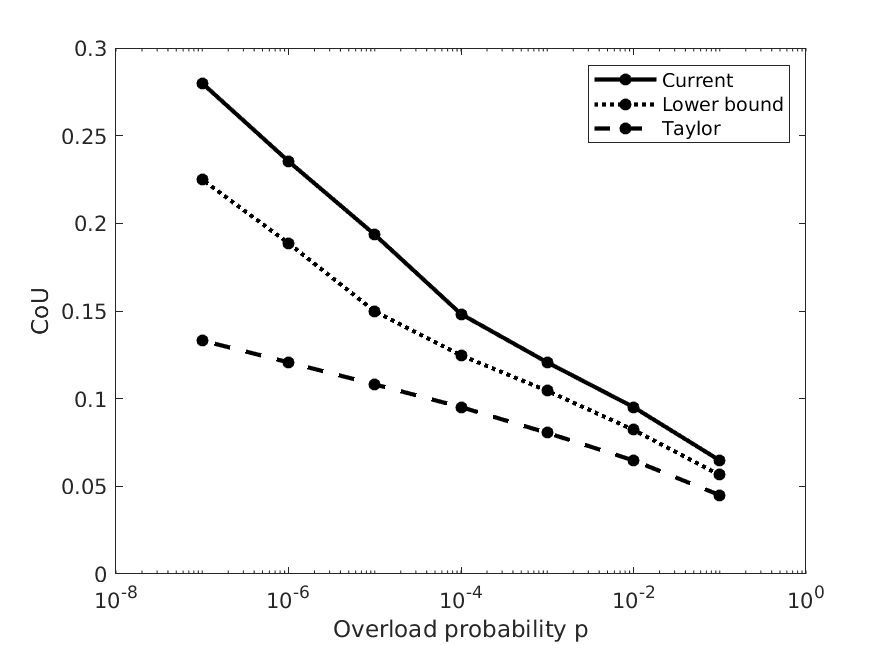}  
          \caption{\footnotesize{$T=1$ ($60$ minutes), $q(T)=0.04$.}}
\label{fig:118bus_60}
  \end{subfigure}
   \caption{\footnotesize{$\text{CoU}^{(\mathcal{R})}$ for different overload probabilities $p$ and time intervals $T$,  $\mathcal{R}\in\{\Rcur,\Rlb,\Rtaylor\}$.}
\label{fig:118bus}
}
\end{figure}

We see that enforcing constraints on line currents results in higher system costs than the ones achieved by using temperature-based constraints, consistently across different probability levels and time intervals.  The gain is more pronounced over shorter intervals, capturing the intuition that current overloads are permissible for short periods, and for smaller probabilities: for instance, $\text{CoU}$ drops from $8\%$ to $3\%$ when $\Rlb$ is used over $\Rcur$, for $T=1/4, p=10^{-7}$.

\section{Concluding remarks}
\label{sc: conclusions}

We employed large deviations theory to develop tractable capacity
regions for power grids with variable power injections, modeled as
small-noise diffusion processes, assuming currents behave according to
the DC power flow equations.  These capacity regions define the set of
initial power injections such that the probability of a
current/temperature overload in a given interval is very small, and
can be used as computationally tractable chance-constraints in OPF
formulations.  Incorporating the transient relationship between line
temperature and line current leads to enlarged capacity regions. While
this enlarged region is difficult to compute, we develop tractable
approximations that improve upon the capacity region defined by the
conservative current overload constraint.

A natural (and possibly straighforward) follow-up to this work would be to consider other linearizations of the AC power flow equations (for example, a first-order Taylor expansion around a nominal operating point). Extensions to more general classes of power injection processes presents another interesting avenue for future work, as does the complementary task of fitting a suitable SDE model based on empirical data of renewable generation.

Finally, we note that potential of our large-deviations results goes beyond the development of capacity regions.  Our results can be used to speed up more detailed simulations, as in
\cite{Wadman2016}. In addition, the ranking of transmission lines
according to their overload probability makes our techniques
applicable to identify the most vulnerable parts of the network~\cite{Nesti2018}.

  {\small
 \bibliographystyle{IEEEtran}
 \bibliography{IEEEabrv,refs_updated}

\begin{thebibliography}{10}
\providecommand{\url}[1]{#1}
\csname url@samestyle\endcsname
\providecommand{\newblock}{\relax}
\providecommand{\bibinfo}[2]{#2}
\providecommand{\BIBentrySTDinterwordspacing}{\spaceskip=0pt\relax}
\providecommand{\BIBentryALTinterwordstretchfactor}{4}
\providecommand{\BIBentryALTinterwordspacing}{\spaceskip=\fontdimen2\font plus
\BIBentryALTinterwordstretchfactor\fontdimen3\font minus
  \fontdimen4\font\relax}
\providecommand{\BIBforeignlanguage}[2]{{%
\expandafter\ifx\csname l@#1\endcsname\relax
\typeout{** WARNING: IEEEtran.bst: No hyphenation pattern has been}%
\typeout{** loaded for the language `#1'. Using the pattern for}%
\typeout{** the default language instead.}%
\else
\language=\csname l@#1\endcsname
\fi
#2}}
\providecommand{\BIBdecl}{\relax}
\BIBdecl

\bibitem{Yao2015}
Y.~Yang, ``Hybrid grids,'' \emph{DNV GL Strategic research innovation position
  paper 2}, 2015.

\bibitem{Ela2011}
E.~Ela, M.~Milligan, and B.~Kirby, ``Operating reserves and variable
  generation,'' National Renewable Energy Laboratory, Tech. Rep., 2011.

\bibitem{Mills2010}
A.~Mills and R.~Wiser, ``Implications of wide-area geographic diversity for
  short-term variability of solar power,'' Lawrence Berkeley National
  Laboratory, Tech. Rep., 2010.

\bibitem{Hodge2012}
B.~Hodge, S.~Shedd, and A.~Florita, ``Examining the variability of wind power
  output in the regulation time frame,'' National Renewable Energy Laboratory,
  Tech. Rep., 2012.

\bibitem{Fu2001}
W.~Fu and J.~McCalley, ``Risk based optimal power flow,'' in \emph{Proceedings
  of IEEE Porto Power Tech Conference}, 2001.

\bibitem{Wadman2012}
W.~Wadman, G.~Bloemhof, D.~Crommelin, and J.~Frank, ``Probabilistic power flow
  simulation allowing temporary current overloading,'' in \emph{Proceedings of
  the International Conference on Probabilistic Methods Applied to Power
  Systems}, 2012.

\bibitem{Shortle2013}
J.~F. Shortle, ``Efficient simulation of blackout probabilities using
  splitting,'' \emph{International Journal of Electrical Power \& Energy
  Systems}, 2013.

\bibitem{Bosman2014}
J.~Bosman, J.~Nair, and B.~Zwart, ``On the probability of current and
  temperature overloading in power grids: a large deviations approach,''
  \emph{ACM SIGMETRICS Performance Evaluation Review}, 2014.

\bibitem{Wadman2016}
W.~S. Wadman, D.~T. Crommelin, and B.~P. Zwart, ``A large-deviation-based
  splitting estimation of power flow reliability,'' \emph{ACM Transactions on
  Modeling and Computer Simulation (TOMACS)}, 2016.

\bibitem{Iversen2016}
E.~B. Iversen, J.~M. Morales, J.~K. Møller, and H.~Madsen, ``Short-term
  probabilistic forecasting of wind speed using stochastic differential
  equations,'' \emph{International Journal of Forecasting}, 2016.

\bibitem{Moller2016}
J.~K. M{\o}ller, M.~Zugno, and H.~Madsen, ``Probabilistic forecasts of wind
  power generation by stochastic differential equation models,'' \emph{Journal
  of Forecasting}, 2016.

\bibitem{Iversen2017}
E.~B. Iversen, J.~M. Morales, J.~K. Møller, P.-J. Trombe, and H.~Madsen,
  ``Leveraging stochastic differential equations for probabilistic forecasting
  of wind power using a dynamic power curve,'' \emph{Wind Energy}, 2017.

\bibitem{Purchala2005}
K.~Purchala, L.~Meeus, D.~{Van Dommelen}, and R.~Belmans, ``{Usefulness of DC
  power flow for active power flow analysis},'' in \emph{IEEE Power Engineering
  Society General Meeting}.\hskip 1em plus 0.5em minus 0.4em\relax IEEE, 2005,
  pp. 2457--2462.

\bibitem{Powell2004}
L.~Powell, \emph{Power system load flow analysis}.\hskip 1em plus 0.5em minus
  0.4em\relax McGraw Hill Professional, 2004.

\bibitem{Stott2009}
B.~Stott, J.~Jardim, and O.~Alsac, ``Dc power flow revisited,'' \emph{IEEE
  Transactions on Power Systems}, 2009.

\bibitem{Iwamoto1981}
S.~Iwamoto and Y.~Tamura, ``A load flow calculation method for ill-conditioned
  power systems,'' \emph{IEEE Transactions on Power Apparatus and Systems},
  1981.

\bibitem{Baillieul1982}
J.~Baillieul and C.~I. Byrnes, ``Geometric critical point analysis of lossless
  power system models,'' \emph{IEEE Transactions on Circuits and Systems},
  1982.

\bibitem{Molzahn2013}
D.~K. Molzahn, B.~C. Lesieutre, and C.~L. DeMarco, ``A sufficient condition for
  power flow insolvability with applications to voltage stability margins,''
  \emph{IEEE Transactions on Power Systems}, 2013.

\bibitem{Dembo1998}
A.~Dembo and O.~Zeitouni, \emph{Large deviations techniques and
  applications}.\hskip 1em plus 0.5em minus 0.4em\relax Springer, 1998.

\bibitem{Wan1999}
H.~Wan, J.~McCalley, and V.~Vittal, ``Increasing thermal rating by risk
  analysis,'' \emph{IEEE Transactions on Power Systems}, 1999.

\bibitem{Can2004}
T.~R. US-Canada Power System Outage Task~Force, ``Final report on the august
  14, 2003 blackout in the united states and canada,'' Tech. Rep., 2004.

\bibitem{Cal2012}
FERC and T.~R. NERC, ``Arizona-southern california outages on september 8,
  2011,'' Tech. Rep., 2012.

\bibitem{Phan2014}
D.~Phan and S.~Ghosh, ``Two-stage stochastic optimization for optimal power
  flow under renewable generation uncertainty,'' \emph{ACM Transactions on
  Modeling and Computer Simulation}, 2014.

\bibitem{Summers2014}
T.~Summers, J.~Warrington, M.~Morari, and J.~Lygeros, ``Stochastic optimal
  power flow based on convex approximations of chance constraints,'' 2014,
  under submission.

\bibitem{Bienstock2013}
D.~Bienstock, J.~Blanchet, and J.~Li, ``Stochastic models and control for
  electrical power line temperature,'' \emph{Energy Systems}, 2013.

\bibitem{Bienstock2014}
D.~Bienstock, M.~Chertkov, and S.~Harnett, ``Chance-constrained optimal power
  flow: Risk-aware network control under uncertainty,'' \emph{SIAM Review},
  2014.

\bibitem{Vrakopoulou2013_2}
M.~Vrakopoulou, K.~Margellos, J.~Lygeros, and G.~Andersson, \emph{Probabilistic
  Guarantees for the N-1 Security of Systems with Wind Power Generation}.\hskip
  1em plus 0.5em minus 0.4em\relax Springer India, 2013.

\bibitem{Bolognani2016}
S.~Bolognani and F.~Dörfler, ``Fast scenario-based decision making in
  unbalanced distribution networks,'' in \emph{2016 Power Systems Computation
  Conference (PSCC)}, June 2016.

\bibitem{Vrakopoulou2013}
M.~Vrakopoulou, M.~Katsampani, K.~Margellos, J.~Lygeros, and G.~Andersson,
  ``Probabilistic security-constrained ac optimal power flow,'' in \emph{2013
  IEEE Grenoble Conference}, June 2013.

\bibitem{Roald2017}
L.~Roald and G.~Andersson, ``Chance-constrained ac optimal power flow:
  Reformulations and efficient algorithms,'' \emph{IEEE Transactions on Power
  Systems}, 2017.

\bibitem{Calafiore2005}
G.~Calafiore and M.~C. Campi, ``Uncertain convex programs: randomized solutions
  and confidence levels,'' \emph{Mathematical Programming}, 2005.

\bibitem{Nemirovski2006}
A.~Nemirovski and A.~Shapiro, ``Convex approximations of chance constrained
  programs,'' \emph{SIAM Journal on Optimization}, 2006.

\bibitem{li11}
N.~Li, L.~Chen, and S.~Low, ``Optimal demand response based on utility
  maximization in power networks,'' in \emph{IEEE Power and Energy Society
  General Meeting}, 2011.

\bibitem{Madaeni12}
S.~Madaeni and R.~Sioshansi, ``The impacts of stochastic programming and demand
  response on wind integration,'' \emph{Energy Systems}, 2013.

\bibitem{Barton04}
J.~Barton and D.~Infield, ``Energy storage and its use with intermittent
  renewable energy,'' \emph{IEEE Transactions on Energy Conversion}, 2004.

\bibitem{kim2009optimal}
J.~H. Kim and W.~B. Powell, ``Optimal energy commitments with storage and
  intermittent supply,'' \emph{Operations Research}, 2011.

\bibitem{Weber2010}
C.~Weber, ``Adequate intraday market design to enable the integration of wind
  energy into the european power systems,'' \emph{Energy Policy}, 2010.

\bibitem{nair2014}
J.~Nair, S.~Adlakha, and A.~Wierman, ``Energy procurement strategies in the
  presence of intermittent sources,'' in \emph{Proceedings of ACM SIGMETRICS},
  2014.

\bibitem{Kersulis2015}
J.~Kersulis, I.~Hiskens, M.~Chertkov, S.~Backhaus, and D.~Bienstock,
  ``Temperature-based instanton analysis: Identifying vulnerability in
  transmission networks,'' in \emph{2015 IEEE Eindhoven PowerTech}, June 2015.

\bibitem{Coffrin2012}
C.~Coffrin, P.~Van~Hentenryck, and R.~Bent, ``Approximating line losses and
  apparent power in {AC} power flow linearizations,'' in \emph{IEEE Power and
  Energy Society General Meeting}, 2012.

\bibitem{Pender1949}
H.~Pender and W.~Del~Mar, \emph{Electrical engineers' handbook}.\hskip 1em plus
  0.5em minus 0.4em\relax Wiley, 1949.

\bibitem{Bucklew1990}
J.~A. Bucklew, \emph{Large deviation techniques in decision, simulation, and
  estimation}.\hskip 1em plus 0.5em minus 0.4em\relax Wiley New York, 1990.

\bibitem{Touchette2009}
H.~Touchette, ``The large deviation approach to statistical mechanics,''
  \emph{Physics Reports}, 2009.

\bibitem{Nesti2018}
T.~Nesti, A.~Zocca, and B.~Zwart, ``Emergent failures and cascades in power
  grids: a statistical physics perspective,'' \emph{Physical Review Letters},
  2018.

\bibitem{Kolumban2017}
S.~Kolumban, S.~Kapodistria, and N.~Nooraee, ``{Short and long-term wind
  turbine power output prediction},'' \emph{ArXiv e-prints}, 2017.

\bibitem{Berg2016}
J.~Berg, A.~Natarajan, J.~Mann, and E.~Patton, ``{Gaussian vs non-Gaussian
  turbulence: impact on wind turbine loads},'' \emph{Wind Energy}, 2016.

\bibitem{MilanWaechterPeinke2013}
P.~Milan, M.~W\"achter, and J.~Peinke, ``Turbulent character of wind energy,''
  \emph{Phys. Rev. Lett.}, Mar 2013.

\bibitem{Bonnans2000}
J.~F. Bonnans and A.~Shapiro, \emph{Perturbation analysis of optimization
  problems}.\hskip 1em plus 0.5em minus 0.4em\relax Springer, 2000.

\bibitem{Heymann2008}
M.~Heymann and E.~Vanden-Eijnden, ``The geometric minimum action method: A
  least action principle on the space of curves,'' \emph{Communications on Pure
  and Applied Mathematics: A Journal Issued by the Courant Institute of
  Mathematical Sciences}, 2008.

\bibitem{Zimmerman2011}
R.~D. Zimmerman, C.~E. Murillo-S{\'a}nchez, and R.~J. Thomas, ``Matpower:
  Steady-state operations, planning, and analysis tools for power systems
  research and education,'' \emph{Power Systems, IEEE Transactions on}, 2011.

\bibitem{Murillo2013}
C.~E. Murillo-S{\'a}nchez, R.~D. Zimmerman, C.~L. Anderson, and R.~J. Thomas,
  ``Secure planning and operations of systems with stochastic sources, energy
  storage, and active demand,'' \emph{IEEE Transactions on Smart Grid}, 2013.

\bibitem{Sun2010}
J.~Sun and L.~Tesfatsion, ``Dc optimal power flow formulation and solution
  using quadprogj,'' 2010.

\bibitem{Hodge2011}
B.~Hodge and M.~Milligan, ``Wind power forecasting error distributions over
  multiple timescales,'' in \emph{2011 IEEE Power and Energy Society General
  Meeting}, July 2011.

\bibitem{Bapat2010}
R.~B. Bapat, \emph{Graphs and matrices}.\hskip 1em plus 0.5em minus 0.4em\relax
  Springer, 2010.

\end{thebibliography}
 }

\clearpage
\appendix
\begin{proof}[Proof of Lemma \ref{lm: rank C}]
Since $w_{\ell}\ne 0$, $I_{\ell,\max}\ne 0$ for all $\ell=1,\ldots,L$, the matrices $W$ and $\Lambda$ are nonsingular. Following
\cite{Bienstock2013} we see that $\rk \breve{B}=N$, and Lemma $2.2$ in~\cite{Bapat2010}
 guarantees that $\rk A=N$ and $\mbox{Ker}(A)=
\mbox{Span} ((1,\ldots,1)^T)$. Since $W$ is nonsingular, $\rk(W A\breve{B})=\rk(A\breve{B})$, and $\rk(A\breve{B})\le\min(\rk (A),\rk (\breve{B}))=N$. On the other hand, if $x\in\mbox{Ker} (A\breve{B})$ then
$A\breve{B}x=0 \iff \breve{B}x\in\mbox{Ker}(A)=
\mbox{Span} ((1,\ldots,1)^T) \iff  \breve{B}x=0\iff x\in\mbox{Ker}(\breve{B})$,
where in the second implication we used that the first component of $\breve{B}x$ is $0$.
Therefore $\mbox{dim}\mbox{Ker} (A\breve{B})=\mbox{dim}\mbox{Ker}(\breve{B})=1$, 
yielding $\rk(A\breve{B})=N+1-\mbox{dim} \mbox{Ker} (A\breve{B})=N$.
Since $\Lambda$ is nonsingular, the matrix $\overline{C}$ must have $N$ linear independent columns. But its first column is zero (since the first column of $\breve{B}$ is zero), therefore the columns from $2$ to $m\le N$ of $\overline{C}$ are linearly independent, i.e. the
matrix $C$ has full rank $m$.
\end{proof}

\begin{proof}[Proof of Lemma~\ref{lm: convex current}] 
  First notice that a vector $(\mu,\mu_D)$ such that
  $\|\nu\|=\|C\mu+C_D\mu_D\|<1$ belongs to the capacity region
  $\Rcur$, i.e. $\IC^*(\mu,\mu_D)=\inf_{\substack{g\in H^1_{\mu}:\\ \|C_D\mu_D+Cg\|=1}} \IP(g)\ge \epsilon \log(1/p)$,
  if and only if the following implication holds:
  \small
\begin{equation}\label{eq: P1}
 \exists g\in H^1_{\mu} \text{ s.t. }\IP(g)<\epsilon\log(1/p)\implies \|C_D\mu_D+Cg\|<1.
\end{equation}
\normalsize
\noindent Consider two admissible vectors
$(\mu,\mu_D),(\mu,\tilde{\mu}_D)\in\Rcur$,
and let $\lambda\in[0,1]$.  We want to show that $(\mu,\lambda
\mu_D+(1-\lambda))\tilde{\mu}_D\in\Rcur$. To this end, take
$g\in H^1_{\mu}$ to be such that $\IP(g)<\epsilon \log(1/p)$, and let us write
$\|\lambda C_D\mu_D+(1-\lambda)C_D\tilde{\mu}_D+Cg\|=
\|\lambda (C_D\mu_D+Cg)+(1-\lambda)(C_D\tilde{\mu}_D+Cg)\|\le
\lambda \|(C_D\mu_D+Cg)\|+(1-\lambda)\|(C_D\tilde{\mu}_D+Cg)\|<\lambda+(1-\lambda)=1,
$ where we used property $\eqref{eq: P1}$ and the fact that
$(\mu,\mu_D),(\mu,\tilde{\mu}_D)$ are admissible. Therefore, $\lambda
\mu_D+(1-\lambda)\tilde{\mu}_D$ is admissible (notice that the above calculation implies in particular that
$\|C\mu+C_D(\lambda\mu_D+(1-\lambda)\tilde{\mu}_D\|<1$).
\end{proof}

\begin{proof}[Proof of Lemma~\ref{lm: overflow_boundary}]
   Define $S_1 = \{g \in H_{\mu}^1\ :\ \|{C_{\ell} g(t)+y_{\ell}}\|_{\infty} \geq 1\},\,
    S_2 = \{g \in H_{\mu}^1\ :\ |C_{\ell} g(T)+y_{\ell}| = 1\}.$
 We have to prove that $\inf_{g \in S_1} \IP(g) = \inf_{g \in S_2} \IP(g).$ Since $S_2 \subset S_1,$ it follows that $\inf_{g \in S_1} \IP(g) \leq \inf_{g \in S_2} \IP(g).$ To prove the reverse inequality, we show that for
  any $g \in S_1,$ there exists $\tilde{g} \in S_2$ such that
  $\IP(\tilde{g}) \leq \IP(g).$
Pick $g \in S_1.$ Let $t' \in [0,T]$ be the first time such that
  $|y_{\ell} + C_{\ell} g(t')| = 1$. Clearly $t'>0$, since $|y_{\ell} + C_{\ell} g(0)|=|\nu_{\ell}|<1$. If $t' = T,$ we may take $\tilde{g}(t) =
  g(t).$ If $t' < T,$ define $\tilde{g}(t)$ by time-shifting $g(t)$ to
  the right as follows:
  \begin{equation*}
    \tilde{g}(t) = \left\{
      \begin{array}{cl}
        \mu & \text{ for } 0 \leq t < T-t', \\
        g(t-T+t') & \text{ for } T-t' \leq t \leq T.
      \end{array}
    \right.
  \end{equation*}
  It is easy to check that $\tilde{g} \in S_2,$ and that
  $\IP(\tilde{g}) \leq \IP(g)$, because the path $\tilde{g}$ incurs no cost up to time $T-t'$. Indeed, since in the interval $[0,T-t']$ $\tilde{g}$ is constantly equal to $\mu$, we have $b(\tilde{g}(t))=b(\mu)=0$ and  $\tilde{g}\pr(t)=0$, yielding
   $\int_{0}^{T-t'} \Bigl(\frac{g\pr_i-b_i(g_i)}{l_i(g_i)}\Bigr)^2dt=0$
   and thus
   {\footnotesize
 \begin{align*}
  \IP(\tilde{g})=
   \int_{T-t'}^{T} \Bigl(\frac{\tilde{g}\pr_i-b_i(\tilde{g}_i)}{l_i(\tilde{g}_i)}\Bigr)^2dt=
    \int_{0}^{t'} \Bigl(\frac{g\pr_i-b_i(g_i)}{l_i(g_i)}\Bigr)^2dt\le \IP(g).
    \end{align*}
    }
 \end{proof}
\begin{lemma}\label{lm: monotonicity}
   The function $a\to\psi_{\ell}^{(a)}$ is non-decreasing for $a >
   \nu_{\ell}$ and non-increasing for $a<\nu_{\ell}$.
 \end{lemma}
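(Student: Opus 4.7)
The plan is to observe that, for $a > \nu_{\ell}$, the quantity $\psi_{\ell}^{(a)}$ coincides with
\[
M_{\ell}(a) := \inf\Bigl\{\IP(g) : g \in H^1_{\mu}(\mathbb{R}^m),\ \max_{t\in[0,T]} \bigl(y_{\ell} + C_{\ell} g(t)\bigr) \ge a\Bigr\},
\]
the infimum of $\IP$ over paths whose $\ell$-th current projection ever exceeds level $a$. Since $a \mapsto M_{\ell}(a)$ is trivially non-decreasing by set-inclusion, monotonicity of $\psi_{\ell}^{(a)}$ for $a > \nu_{\ell}$ follows; the case $a < \nu_{\ell}$ is treated symmetrically, replacing the upper barrier by a lower one and maxima by minima.

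The easy direction is $M_{\ell}(a) \le \psi_{\ell}^{(a)}$: any $g$ with $y_{\ell} + C_{\ell} g(T) = a$ satisfies $\max_t(y_{\ell} + C_{\ell} g(t)) \ge a$ (the max is attained at least at $t = T$), so the feasibility set defining $\psi_{\ell}^{(a)}$ is contained in that defining $M_{\ell}(a)$. For the reverse inequality, I fix $\eta > 0$ and pick $g$ near-minimizing $M_{\ell}(a)$ with $\IP(g) \le M_{\ell}(a) + \eta$. Since $y_{\ell} + C_{\ell} g(0) = \nu_{\ell} < a$, the intermediate value theorem gives a first time $t^* \in (0, T]$ with $y_{\ell} + C_{\ell} g(t^*) = a$. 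I then construct a modified path $\tilde g \in H^1_{\mu}$ with $y_{\ell} + C_{\ell} \tilde g(T) = a$ and $\IP(\tilde g) \le \IP(g)$, obtaining $\psi_{\ell}^{(a)} \le \IP(\tilde g) \le M_{\ell}(a) + \eta$; taking $\eta \downarrow 0$ closes the loop. Monotonicity then follows from $\{g : \max_t (y_{\ell} + C_{\ell} g(t)) \ge a_2\} \subseteq \{g : \max_t (y_{\ell} + C_{\ell} g(t)) \ge a_1\}$ for $\nu_{\ell} < a_1 < a_2$, which yields $\psi_{\ell}^{(a_1)} = M_{\ell}(a_1) \le M_{\ell}(a_2) = \psi_{\ell}^{(a_2)}$.

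The technical heart of the argument is the construction of $\tilde g$: on $[0, t^*]$ set $\tilde g = g$, but the extension on $[t^*, T]$ must terminate at a state whose $\ell$-th projection still equals $a - y_{\ell}$. A constant extension $\tilde g \equiv g(t^*)$ on $[t^*, T]$ satisfies the endpoint condition but adds the cost $\tfrac{T-t^*}{2} \sum_i b_i^2(g_i(t^*))/l_i^2(g_i(t^*))$, while following the deterministic drift $\tilde g' = b(\tilde g)$ is free but generally shifts the projection away from $a - y_{\ell}$. The natural resolution---and precisely the step carried out in the proof of Lemma~\ref{lm: overflow_boundary} for the special case $a = 1$---is to exploit the non-negative slack $\IP(g|_{[t^*, T]})$ sitting on the discarded portion of $g$, either via a hybrid drift/constant extension whose correction cost is absorbed into this slack, or via a time-reparameterization of $g|_{[0, t^*]}$ onto $[0, T]$. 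The argument of Lemma~\ref{lm: overflow_boundary} ports over to general $a > \nu_{\ell}$ with only cosmetic changes, and with that in hand the monotonicity statement follows.
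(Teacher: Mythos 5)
Your overall strategy is essentially the paper's: both arguments rest on the observation that a path whose $\ell$-th current coordinate reaches a level $a>\nu_{\ell}$ must first pass through any intermediate level, after which one truncates at that first hitting time and pads with a zero-cost segment. Your packaging via the level-crossing infimum $M_{\ell}(a)$ (monotone by set inclusion) versus the paper's direct comparison of $\psi_{\ell}^{(\tilde a)}$ with $\psi_{\ell}^{(a)}$ along a near-optimal path is only a cosmetic difference.

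However, the step you yourself flag as the technical heart is misdescribed, and neither of the two realizations you offer actually works. You keep $\tilde g=g$ on $[0,t^*]$ and try to \emph{append} something on $[t^*,T]$. Freezing at $g(t^*)$ costs $\tfrac{T-t^*}{2}\sum_i b_i(g_i(t^*))^2/l_i(g_i(t^*))^2$, and this is \emph{not} in general absorbed by the discarded slack $\IP(g|_{[t^*,T]})$: that slack is zero whenever $g$ follows the drift on $[t^*,T]$, while the freezing cost is strictly positive whenever $b(g(t^*))\neq 0$. Time-reparameterizing $g|_{[0,t^*]}$ onto $[0,T]$ also fails in general, since stretching a path changes $g'$ relative to the drift term $b(g)$ and the quadratic cost need not decrease (it does only in the driftless case). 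The construction of Lemma~\ref{lm: overflow_boundary} --- and of the paper's proof of the present lemma --- goes the other way: it \emph{prepends} the constant segment $\tilde g\equiv\mu$ on $[0,T-t^*]$ and runs $g(\cdot-T+t^*)$ on $[T-t^*,T]$. The prepended piece is free precisely because $\mu$ is an equilibrium of the drift ($b(\mu)=0$, $\tilde g'=0$), so $\IP(\tilde g)=\IP(g|_{[0,t^*]})\le\IP(g)$ with no correction term to absorb, and the endpoint condition $y_{\ell}+C_{\ell}\tilde g(T)=a$ holds by construction. With that replacement your argument is correct and does carry over to arbitrary levels $a$ (and symmetrically for $a<\nu_{\ell}$); as written, the key inequality $\IP(\tilde g)\le\IP(g)$ is not established.
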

 \begin{proof}
  First suppose $a\ge \tilde{a}> \nu_{\ell}\ge 0$. The case
   $a\le \tilde{a}< \nu_{\ell}\le 0$ is analogous. We want to show that for all $f\in y+CH^1_{\mu}$ such that
   $f_{\ell}(T)=a$, there exist a $\tilde{f}\in y+C H^1_{\mu}$ with $\tilde{f}_{\ell}(T)=\tilde{a}$ and
   $\IC(\tilde{f})\le \IC(f)$. Since $f_{\ell}(0)=\nu_l<\tilde{a}\le a$ and $f$ is continuous, there exist a $t'\in(0,T)$
   such that $f(t')=\tilde{a}$. Define $\tilde{f}(t)$ as follows:
  \begin{equation*}
    \tilde{f}(t) = \left\{
      \begin{array}{cl}
        \nu & \text{ for } 0 \leq t < T-t' \\
        f(t-T+t') & \text{ for } T-t' \leq t \leq T
      \end{array}
    \right.
  \end{equation*}
  It is easy to check that $\tilde{f} \in y+CH^1_{\mu}$, $\tilde{f}_{\ell}(T)=\tilde{a}$ and
  $\IC(\tilde{f}) \leq \IC(f).$
  The proof that $\psi_{\ell}^{(a)}$ is non-increasing for $a<\nu_{\ell}$ goes along the same lines.
  \end{proof}
\begin{proof}[Proof of Proposition \ref{pr: OU_regions}]
\ignore{In \cite{Wadman2016} the optimal current paths and values are obtained
for a single line and a deterministic initial value
$y=0\in\mathbb{R}$. Since our focus is on capacity regions, in our
setting we need to consider a general vector $y\in\mathbb{R}^L$ of
deterministic initial values for the currents.  }
Following the
methods in \cite{Wadman2016}, for $\ell\in\mathcal{L\pr}=\{\ell \in\mathcal{L}\,:\, C_{\ell}\ne0\}$ it
can be shown that
\begin{equation}\label{eq: psi_a}
\begin{split}
  \psi_{\ell}^{(a)}=\frac{(a-\nu_{\ell})^2}{C_{\ell}M_TC_{\ell}^{\top}},
  \end{split}
\end{equation}
where $M_t=L^2D^{-1}(I-e^{-2Dt})e^{D(t-T)}$.
The corresponding optimal paths for power injections and currents leading to the overload of line $\ell$ are
\begin{equation}
\begin{split}\label{eq: current path OU}
& X^{({\ell})}(t)=(a-\nu_{\ell})\frac{M_tC_{\ell}^{\top}}{C_{\ell}M_TC_{\ell}^{\top}}+\mu\in\R^{m},\\
& Y^{({\ell})}(t)=CX^{({\ell})}(t)+y\in\R^L.
 \end{split}
\end{equation}
%
 It follows easily
that \begin{equation}
\IC^*(\overline{\mu})=\min_{\ell\in\mathcal{L\pr}}\frac{(1-|\nu_{\ell}|)^2}{C_{\ell}M_TC_{\ell}^{\top}}.
\end{equation}
\ignore{Recall that $\overline{\mu}=[\mu,\mu_D]\in\R^N$ and
$\nu=C\mu+C_D\mu_D\in\R^L$, and define
\[\beta_{\ell}:=\sqrt{\frac{(1-e^{-2\gamma T})\eps\log(1/p)\sigma_{\ell}^2}{\gamma}},\quad \sigma_{\ell}^2:=C_{\ell}L^2C_{\ell}^{\top}.\]}
 A straightforward calculation yields the desired result.
\end{proof}
\begin{proof}[Proof of Lemma \ref{lm: convex temperature}] 
First notice that a vector $(\mu,\mu_D)$
such that
 $\|\nu\|=\|C\mu+C_D\mu_D\|<1$ is admissible if and only if the following implication holds:
 {\small
\begin{equation}\label{eq: P2}
 \exists g\in H^1_{\mu}\text{ s.t. } \IP(g)<\epsilon\log(1/p) \implies \|h^{g,\mu,\mu_D}\|_{\infty}<1 ,
\end{equation}
where
}
\begin{align*}&h^{g,\mu,\mu_D}_{\ell}(t):=\xi_{\tau}(y +Cg)=(y_{\ell}+C_{\ell}\mu)^2 e^{-t/\tau} + \\
&\frac 1\tau \int_0^t e^{-(t-s)/\tau} (y_{\ell}+C_{\ell}g(s))^2 ds,\quad y=C_D\mu_D.
\end{align*}
For all $\ell\in\mathcal{L}$ and for all $t\in[0,T]$, $h^{g,\mu,\mu_D}_\ell(t)$ is non-negative and convex in
$\mu_D$. Using the property \eqref{eq: P2}, the rest of the proof goes along the lines of the proof of lemma \ref{lm: convex current}.
\end{proof}
\begin{proof}[Proof of lemma \ref{lm: temp_bound}] 
The proof follows easily from the observation that the event
  $\norm{\Theta^{\eps,\tau}_{\ell}} \geq 1$ implies the event
  $\norm{Y^{\epsilon}_{\ell}} \geq \alpha_{\ell}.$ Indeed, it is easy to check
  that if $|Y_{\ell}^{\epsilon}(t)| < \alpha_{\ell}$ for all $t \in [0,T],$ then it follow from \eqref{eq: ode} that
  $\Theta_{\ell}^{\eps,\tau}(t) < 1$ for all $t \in [0,1].$ Thus, we have

 \begingroup
\allowdisplaybreaks
{\small
  \begin{align*} \omega_{\ell} &= \lim_{\epsilon \da 0} -\epsilon \log
    \prob{\norm{\Theta_{\ell}^{\eps,\tau}} \geq 1}
    \geq \lim_{\epsilon \da 0} -\epsilon \log
    \prob{\norm{Y_{\ell}^{\epsilon}} \geq \alpha_{\ell}} \\
    &= \inf_{g:\ \norm {y_{\ell} + C_{\ell} g}  \geq \alpha_{\ell}} \IP(g)
    = \inf_{g:\ |y_{\ell} + C_{\ell} g(T)| = \alpha_{\ell}} \IP(g)\\
    &= \psi_{\ell}^{(\alpha_{\ell})}\wedge\psi_{\ell}^{(-\alpha_{\ell})}.
  \end{align*}
  }
  \endgroup
  \end{proof}

\begin{proof}[Proof of Proposition~\ref{pr: LB}]
 Thanks to Lemma~\ref{lm: temp_bound}
  we see that $\IT^{(LB)}$
is a lower bound for the temperature decay rate, i.e. $\IT^*\ge \IT^{(LB)}$.
Since $\alpha_{\ell}>1>|\nu_{\ell}|$ $\forall \ell$, Lemma~\ref{lm: monotonicity}
   implies $\psi_{\ell}^{(\alpha_{\ell})}\wedge\psi_{\ell}^{(-\alpha_{\ell})}\ge
  \psi_{\ell}^{(1)}\wedge\psi_{\ell}^{(-1)},$
yielding $\IT^{(LB)}\ge\IC^*$.
 \end{proof}

 \begin{proof}[Proof of Proposition~\ref{pr: delta,eta}]
  Thanks to Lemma~\ref{lm: temp_bound}
   we have $\IT^{(LB)}=\min_{{\ell}\in\mathcal{L\pr}}\psi_{\ell}^{(\alpha_{\ell})}\wedge\psi_{\ell}^{(-\alpha_{\ell})}$.
From Eq.~\eqref{eq: psi_a}
 we get
$\psi_{\ell}^{(\alpha_{\ell})}=\frac{(\alpha_{\ell}-\nu_{\ell})^2}{C_{\ell}M_TC_{\ell}^T}$
and thus $\psi_{\ell}^{(\alpha_{\ell})}\wedge\psi_{\ell}^{(-\alpha_{\ell})}=
\psi_{\ell}^{(\text{sign}(\nu_{\ell})\alpha_{\ell})}=\frac{(\alpha_{\ell}-|\nu_{\ell}|)^2}{C_{\ell}M_TC_{\ell}^T}$, yielding the expression for $ \Rlb$.
In the case $D=\gamma I$, a straightforward calculation yields the result.\end{proof}
 \begin{proof}[Proof of Proposition~\ref{pr: OU_taylor}] 
In the case $D=\gamma I$, according to equation \eqref{eq: current path OU}, the optimal current paths to overflow in line ${\ell}$ and
the corresponding decay rate are
\begin{equation}
\begin{split}
&Y^{({\ell})}(t)=(\text{sign}(\nu_{\ell})-\nu_{\ell})\frac{(1-e^{-2\gamma t})e^{\gamma(t-T)}}{1-e^{2\gamma T}}R^{\ell}+\nu,\\
&\psi_{\ell}=\frac{\gamma}{1-e^{-2\gamma T}}\frac{(1-|\nu_{\ell}|)^2}{\sigma_{\ell}^2},
\end{split}
\end{equation}
 where
 $R^{\ell}:=\frac{CL^2C_{\ell}^T}{C_{\ell}L^2C_{\ell}^T}\in\mathbb{R}^L$ and $\sigma_{\ell}^2=C_{\ell}L^2C_{\ell}^T$. Take any ${\ell}^*\in\argmin_{{\ell}\in\mathcal{L\pr}} \psi_{\ell}$.
Recall that ${\ell}^*$ depends
on the initial condition $\overline{\mu}$, i.e.
${\ell}^*={\ell}^*(\overline{\mu})$.
Letting $S^*=\text{sign}(\nu_{{\ell}^*})-\nu_{{\ell}^*}\in\mathbb{R}$ and
$R^*=R^{\ell^*}$, the optimal current path to overflow is
$f_*(t)=Y^{({\ell}^*)}(t)$
and in particular
$f_*(0)=\nu,
 f_*(T)=S^*R^*+\nu,
 (f_*)\pr(0)=\frac{2\gamma e^{\gamma T}}{1-e^{-2\gamma T}}S^*R^*,
 (f_*)\pr(T)=\frac{\gamma(1+e^{-2\gamma T})}{1-e^{-2\gamma T}}S^*R^*.$
After a lengthy but straightforward calculation, which is reported below,
the formula for the Taylor approximation reads
\begin{equation}
\IT^{(TL)}(\bar{\mu})=(1+2\tau_0\gamma)\IC^*(\bar{\mu}).
\end{equation}
The capacity region defined by the Taylor approximation is 
  {\small
  \begin{align*}
   &\Rtaylor=\\
  =&\bigcap_{{\ell}\in\mathcal{L\pr}}\{\overline{\mu}\in\mathbb{R}^N:
\frac{\gamma (1-|\nu_{\ell}|)^2}{(1-e^{-2\gamma T})\sigma_{\ell}^2}
  (1+2\tau_0\gamma)>-\eps\log(p)\},
  \end{align*}
  }
which can be rewritten as
\begin{equation*}
\begin{split}
    \Rtaylor
    =&\bigcap_{{\ell}\in\mathcal{L\pr}}\Bigl\{\overline{\mu}\in\mathbb{R}^N\,:\,|\nu_{\ell}|<1-\eta_{\ell}/\sqrt{1+2\tau_0\gamma}\Bigr\}.
\end{split}\end{equation*}
\end{proof}

\begin{proof}[Proof of equation \eqref{eq:calculationstaylor}]
Since $\xi_{\tau}(f)=h$ if and only if
$\tau h\pr+h=f^2$, the temperature rate function reads
{\small \begin{align*}
&\IT(h)= \begin{cases}
G(\tau,h)\quad &\text{if } h\in \xi_{\tau}(y+C H_{\mu}^1),\\
\infty &\text{otherwise},
\end{cases}\\
&G(\tau,h)=\ICS(\tau h\pr +h)=\IC(f_{\tau h\pr+ h})=\IP(C^+(f_{\tau h\pr+ h}-y)),
 \end{align*}}
 
\noindent where
$
 \ICS(F)=\inf_{\substack{_{f\in H^1_{\nu}}:\, f^2=F}} \IC(f)
$
is the rate function for the current squared process $(Y^\eps(t))^2$
and $f_F:=\argmin_{\substack{_{f\in H^1_{\nu}}:\\ f^2=F}} \IC(f).$
Note that $\ICS(F)$ can be written as
\begin{align*}
& \ICS(F)=\sum_{i=1}^m \int_{0}^T \WH(F(t),F\pr(t))dt,\\
&\WH(F(t),F\pr(t))=\frac{1}{2}
\Bigr[\frac{C_i^+ f_{F}\pr(t)-b_i(C_i^+(f_{F}(t)-y))}
{l_i(C_i^+(f_{F}(t)-y))}\Bigl]^2.
\end{align*}

The partial derivatives of the function
\[\tau\to \WH((\tau h\pr+h),(\tau h\prpr+h\pr))\] in $\tau=0$ read
{\small
\begin{equation*}
            \pa\WH\Bigl(\tau h\pr+h,\tau h\prpr+h\pr\Bigr)\Bigl|_{\tau=0}=\WH^{(\ell)}(h,h\pr)h\pr_{\ell}+
  \WH^{(L+\ell)}(h,h\pr)h\prpr_{\ell},
  \end{equation*}
  }
yielding
\begingroup
\allowdisplaybreaks
\begin{align*}
&  \sum_{\ell=1}^L \pa\WH\Bigl(\tau h\pr+h,\tau h\prpr+h\pr \Bigr)\Bigl|_{\tau=0}=\frac{d}{dt}\WH(h,h\pr),\\
&\sum_{\ell=1}^L \pa G(\tau,h)|_{\tau=0}
  =\sum_{\ell=1}^L \pa \ICS\Bigl(\tau h\pr+h\Bigr)\Bigr|_{\tau=0}\\
  =&\sum_{\ell=1}^L\sum_{i=1}^m \int_{0}^T  \pa\WH\Bigl(\tau h\pr+h,\tau h\prpr+h\pr\Bigr)\Bigl|_{\tau=0}\\
  =&\sum_{i=1}^m\int_{0}^T \sum_{\ell=1}^L \pa\WH\Bigl(\tau h\pr+h,\tau h\prpr+h\pr\Bigr)\Bigl|_{\tau=0}\\
  =&\sum_{i=1}^m\int_{0}^T \frac{d}{dt}\WH(h,h\pr) dt=\sum_{i=1}^m\Bigl[ \WH(h(T),h\pr(T))-\\&\WH(h(0),h\pr(0))\Bigr]
  =:\Phi(f_{h}(0),f_{h}(T),f_{h\pr}(0),f_{h\pr}(T))=:\Phi_{f_{h}}.
  \end{align*}
  \endgroup
   If $\tau=\tau_0(1,\ldots,1)^T$, $\tau_0>0$, we get
$\tau\cdot \nabla G(\tau,h)|_{\tau=0}=\tau_0\Phi_{f_{h}}$.
Finally, formula \eqref{eq:closed_form} follows by noticing that if $f_*$ is the optimal current path and $h_*=(f_*)^2$ then $f_{h_*}=f_*$.
\end{proof}

\begin{proof}[Proof of equation \eqref{eq: taylor first order OU}]
 We have
 \begingroup
\allowdisplaybreaks
 \small
\begin{flalign*}
  &\WH(h_*(T),(h_*)\pr(T))=H_i(C_i^{+}(f_*(T)-y),C_i^{+}f_*\pr(T))\\
   =&\frac{1}{2l_i^2} \Bigl(C_i^{+}f_*\pr(T)+\gamma C_i^{+}(f_*(T)-y)-\gamma\mu_i\Bigr)^2\\
  =&\frac{1}{2l_i^2} \Bigl(\frac{\gamma(1+e^{-2\gamma T})}{1-e^{-2\gamma T}}S^*C_i^{+}R+\gamma(C_i^{+}S^*R^*+C_i^{+}(\nu-y))-\gamma\mu_i\Bigr)^2\\
  =&\frac{\Bigl(\gamma(1+e^{-2\gamma T})S^*C_i^{+}R^*+(1-e^{-2\gamma T})\gamma(C_i^{+}S^*R^*)\Bigr)^2}{2l_i^2 (1-e^{-2\gamma T})^2} \\
  =&\frac{2\gamma^2(1-|\nu_{{\ell}^*}|)^2}{l_i^2 (1-e^{-2\gamma T})^2} \Bigl(C_i^{+}R^*\Bigr)^2;\\
&\WH(h_*(0),(h_*)\pr(0))=H_i(C_i^{+}(f_*(0)-y),C_i^{+}f_*\pr(0))\\
   =&\frac{1}{2l_i^2} \Bigl(C_i^{+}f_*\pr(0)+\gamma C_i^{+}(f_*(0)-y)-\gamma\mu_i\Bigr)^2\\
 =&\frac{1}{2l_i^2} \Bigl(\frac{2\gamma e^{-\gamma T}}{1-e^{-2\gamma T}}S^*C_i^{+}R^*+\gamma C_i^{+}(\nu-y)-\gamma\mu_i\Bigr)^2\\
   =&\frac{2\gamma^2e^{-2\gamma T}(1-|\nu_{{\ell}^*}|)^2}{l_i^2 (1-e^{-2\gamma T})^2} \Bigl(C_i^{+}R^*\Bigr)^2;
 \end{flalign*}
 \endgroup
\begin{align*}\Phi_{f_*}=&\sum_{i=1}^m \Bigl[
  \WH(h_*(T),(h_*)\pr(T))-
  \WH(h_*(0),(f_*^2)\pr(0))\Bigr]\\
 &=\frac{2\gamma^2(1-|\nu_{{\ell}^*}|)^2}{1-e^{-2\gamma t }} \sum_{i=1}^m \Bigl(\frac{C_i^{+}R^*}{l_i}\Bigr)^2. \end{align*}
The Taylor approximation thus reads
\begingroup
\allowdisplaybreaks
\small
\begin{equation*}\label{eq: expansion_OU}
  \begin{split}
  &\IT^{(TL)}(\tau_0,\overline{\mu}):=\IC^*(\overline{\mu})+\tau_0\Phi_h=\\
&\frac{\gamma}{1-e^{-2\gamma T}}\frac{ (1-|\nu_{{\ell}^*}|)^2}{\sigma_{{\ell}^*}^2} + \frac{2\tau_0\gamma^2(1-|\nu_{{\ell}^*}|)^2}{1-e^{-2\gamma t }} \sum_{i=1}^m \Bigl(\frac{C_i^{+}R^*}{l_i}\Bigr)^2=\\
 &\frac{\gamma (1-|\nu_{{\ell}^*}|)^2}{1-e^{-2\gamma T}} \Bigl(\frac{1} {\sigma_{{\ell}^*}^2}+2\tau_0\gamma \sum_{i=1}^m \Bigl(\frac{C_i^{+}R^*}{l_i}\Bigr)^2  \Bigr)=\\
&\frac{\gamma (1-|\nu_{{\ell}^*}|)^2}{(1-e^{-2\gamma T})\sigma_{{\ell}^*}^2}
 \Bigl( 1+2\tau_0\gamma \sum_{i=1}^m \frac{l_i^2C_{{\ell}^*i}^2}{\sigma_{{\ell}^*}^2}
 \Bigr)=\\
  &\frac{\gamma (1-|\nu_{{\ell}^*}|)^2}{(1-e^{-2\gamma T})\sigma_{{\ell}^*}^2}
 \Bigl( 1+2\tau_0\gamma  \Bigr)=
 ( 1+2\tau_0\gamma)\IC^*(\overline{\mu}).
\end{split}
\end{equation*}
\endgroup
\ignore{\begingroup
\allowdisplaybreaks
\begin{equation*}\label{eq: expansion_OU}
  \begin{split}
  &\IT^{(TL)}(\tau_0,\overline{\mu}):=\IC^*(\overline{\mu})+\tau_0\Phi_h=\\
&\frac{\gamma}{1-e^{-2\gamma T}}\frac{ (1-|\nu_{{\ell}^*}|)^2}{\sigma_{{\ell}^*}^2} + \frac{2\tau_0\gamma^2(1-|\nu_{{\ell}^*}|)^2}{1-e^{-2\gamma t }} \sum_{i=1}^m \Bigl(\frac{C_i^{+}R^*}{l_i}\Bigr)^2=\\
 &\frac{\gamma (1-|\nu_{{\ell}^*}|)^2}{1-e^{-2\gamma T}} \Bigl(\frac{1} {\sigma_{{\ell}^*}^2}+2\tau_0\gamma \sum_{i=1}^m \Bigl(\frac{C_i^{+}R^*}{l_i}\Bigr)^2  \Bigr)=\\
 &\frac{\gamma (1-|\nu_{{\ell}^*}|)^2}{(1-e^{-2\gamma T})\sigma_{{\ell}^*}^2}
 \Bigl( 1+2\tau_0\gamma \sum_{i=1}^m \Bigl(\frac{C_i^{+}R^*}{l_i}\Bigr)^2 \sigma_{{\ell}^*}^2\Bigr)=\\
&\frac{\gamma (1-|\nu_{{\ell}^*}|)^2}{(1-e^{-2\gamma T})\sigma_{{\ell}^*}^2}
 \Bigl( 1+2\tau_0\gamma \sum_{i=1}^m \frac{l_i^2C_{{\ell}^*i}^2}{\sigma_{{\ell}^*}^2}
 \Bigr)=\\
  &\frac{\gamma (1-|\nu_{{\ell}^*}|)^2}{(1-e^{-2\gamma T})\sigma_{{\ell}^*}^2}
 \Bigl( 1+2\tau_0\gamma  \Bigr)=
 ( 1+2\tau_0\gamma)\IC^*(\overline{\mu}).
\end{split}
\end{equation*}
\endgroup}
\end{proof}

\end{document}